\numberwithin{equation}{section}
\theoremstyle{plain}
\newtheorem{thm}{Theorem}[section]
\newtheorem{coro}[thm]{Corollary}
\newtheorem{prop}[thm]{Proposition}
\newtheorem{lem}[thm]{Lemma}
\newtheorem{defi}[thm]{Definition}
\theoremstyle{definition}
\theoremstyle{remark}
\newtheorem{rem}[thm]{Remark}
\newcommand{\R}{\mathbb{R}}
\newcommand{\W}{\mathcal{W}}
\newcommand{\J}{\mathcal{J}}
\newcommand{\E}{\mathcal{E}}
\newcommand{\V}{\mathcal{V}}
\newcommand{\D}{\mathcal{D}}
\renewcommand{\L}{{\cal L}}
\newcommand\loc{{\mathrm{loc}}}
\newcommand\qtext[1]{\quad\text{#1}\quad}
\let \eps\varepsilon
\newcommand{\C}{\mathcal {C}}
\newcommand\CC{{\cal C}}
\newcommand\clos{\overline}
\DeclareMathOperator{\argmin}{argmin}
\DeclareMathOperator{\lip}{Lip}
\DeclareMathOperator{\diam}{diam}
\newcommand\dive{\mathrm{div}}
\def\<#1,#2>{\left<#1,#2\right>}
\DeclareMathOperator{\Lip}{Lip}
\def\P{{\cal P}}
\newcommand{\dd}{\;{\rm d}}
\newcommand{\id}{{\rm id}}
\newcommand{\un}{{\rm 1\kern -2.5pt l}}
\begin{document}
\begin{frontmatter}
\title{Optimal transport and Cournot-Nash equilibria}
\runtitle{Optimal transport and Cournot-Nash equilibria}
\begin{aug}
\author{\fnms{Adrien} \snm{Blanchet}\ead[label=e1]{Adrien.Blanchet@univ-tlse1.fr}\thanksref{t1}}
 \thankstext{t1}{TSE (GREMAQ, Universit\'e de Toulouse), Manufacture des Tabacs, 21 all\'ee de Brienne, 31000 Toulouse, FRANCE. \url{Adrien.Blanchet@univ-tlse1.fr}} 
\author{\fnms{Guillaume} \snm{Carlier}\ead[label=e2]{carlier@ceremade.dauphine.fr}\thanksref{t2}}
\thankstext{t2}{CEREMADE, UMR CNRS 7534, Universit\'e Paris-Dauphine,  Pl. de Lattre de Tassigny, 75775 Paris Cedex 16, FRANCE. \url{carlier@ceremade.dauphine.fr}}
\end{aug}
\runauthor{A. Blanchet \& G. Carlier}
\begin{abstract}
~ We study a class of games with a continuum of players for which Cournot-Nash equilibria can be obtained by the minimisation of some cost, related to optimal transport.  This cost is not convex in the usual sense in general but it turns out to have hidden strict convexity properties in many relevant cases. This enables us to obtain  new uniqueness results and a characterisation of equilibria in terms of some partial differential equations, a simple numerical scheme in dimension one as well as an analysis of the inefficiency of equilibria. 
\end{abstract}
\begin{keyword}
\kwd{Cournot-Nash equilibria}
\kwd{mean-field games}
\kwd{optimal transport}
\kwd{externalities}
\kwd{Monge-Amp\`ere equations}
\kwd{convexity along generalised geodesics}
\end{keyword}
\end{frontmatter}
\section{Introduction}
Since Aumann's seminal works~\cite{Aumann,Aumann2}, models with a continuum of agents have occupied a distinguished position in economics and game theory.  \cite{Schmeidler} introduced a notion of non-cooperative equilibrium in games with a continuum of agents and established several existence results. In Schmeidler's own words: \emph{Non-atomic games enable us to analyze a conflict situation where the
single player has no influence on the situation but the aggregative behavior
of "large" sets of players can change the payoffs. The examples are numerous:
Elections, many small buyers from a few competing firms, drivers that can
choose among several roads, and so on.}

Following the approach of~\cite{hhk} to Walras equilibrium analysis, \cite{MasColell} reformulated Schmeidler's analysis in terms of joint distributions over agents' actions and characteristics and, in particular, the concept of Cournot-Nash equilibrium distributions. Not only  Mas-Colell's reformulation enabled him to obtain general existence results in an easy and elegant way but it is flexible enough to accommodate quite weak assumptions on the data (which is relevant in the framework of games with incomplete information and a continuum of players for instance). Roughly speaking, in analysing  Cournot-Nash equilibria in the sense of~\cite{MasColell} one can take great advantage of (topological but also geometric) properties of spaces of probability measures. With this respect, it is natural to expect that optimal transport theory (which is an extremely active field in research in mathematics both from an applied and fundamental point, as illustrated by the monumental textbook~\cite{villani2}) may be useful. 

Even though there are very general existence results for Cournot-Nash equilibria (see for instance~\cite{Kahn}) in the literature, we are not aware of classes of problems where there is uniqueness and a full characterisation of such equilibria which is tractable enough to obtain close-form solutions or efficient numerical computation schemes. One of our goals is precisely to go one step beyond abstract existence results (in mixed or pure strategies) and to identify classes of non-atomic games where  Cournot-Nash equilibria are unique and can be fully characterised or numerically computed.

\smallskip

Given a space of players types $X$ endowed with a probability measure $\mu\in \P(X)$  (which gives the exogenous distribution of the type of the agents), an action space $Y$ and a cost $\Phi$: $X\times Y\times \P(Y) \to \R$, $x$-type agents taking action $y$ pay the cost $\Phi(x,y, \nu)$ where $\nu\in \P(Y)$ represents the action distribution. The fact that this cost depends on the other agents actions only through the distribution $\nu$ means that who plays what does not matter {\it i.e.} the game is \emph{anonymous}. A \emph{Cournot-Nash equilibrium} is a joint probability measure $\gamma\in \P(X\times Y)$ with first marginal $\mu$  such that 
\begin{equation}\label{equiggmc}
\gamma(\{(x,y)\in X\times Y\; : \; \Phi(x,y, \nu)=\min_{z\in Y} \Phi(x,z, \nu)\})=1
\end{equation}
where $\nu$ represents  $\gamma$'s second marginal. The probability $\gamma$ is naturally interpreted by saying that $\gamma(A\times B)$ is the probability that agents have their type in $A$ and an action in $B$. The equilibrium is called \emph{pure} if, in addition, $\gamma$ is carried by a graph {\it i.e.} $\mu$-a.e. the agents play in pure strategy. Condition~\eqref{equiggmc} means that agents choose cost minimising strategies given their type and $\nu$ so that, finally, imposing that $\nu$ is the second marginal of $\gamma$ is a simple self-consistency requirement.\smallskip

In the sequel, we will restrict ourselves to the additively separable case where $\Phi(x,y, \nu)=c(x,y)+\V[\nu](y)$, which seems to be a necessary limitation for the optimal transport approach we will develop. Under this separability specification, the connection with optimal transport is almost obvious: if $\gamma$ is a Cournot-Nash equilibrium it necessarily minimises the average of $c$ among probability measures having $\mu$  and  $\nu$ (which is {\it a priori} unknown) as marginals {\it i.e.} it solves the optimal transport problem:
\begin{equation}\label{mkwcintro}
\W_c(\mu, \nu):=\inf_{\gamma\in \Pi(\mu, \nu)} \iint_{X\times Y} c(x,y) \dd\gamma(x,y)
\end{equation}
where $\Pi(\mu, \nu)$ is the set of joint probabilities having $\mu$ and $\nu$ as marginals.  In an euclidean setting, there are well-known conditions on $c$ and $\mu$ which guarantee that such an optimal $\gamma$ necessarily is pure whatever $\nu$ is and this  of course implies purity of equilibria. 

If we go one step further and assume  that $\V[\nu]$ is the differential of some functional $\E$ (see Section~\ref{varapp} for a precise definition), it turns out that if $\nu$ is a  minimiser of $\E[\nu]+\W_c(\mu,\nu)$ and $\gamma$ solves~\eqref{mkwcintro} then it is a Cournot-Nash equilibrium.  This gives a variational device to find equilibria: first find $\nu$ by minimising  $\E[\nu]+\W_c(\mu,\nu)$ and then find $\gamma$ by solving the optimal transport problem~\eqref{mkwcintro} between $\mu$ and $\nu$. This variational approach actually gives new existence results. To the best of our knowledge, usual general existence proofs are via fixed-point arguments and thus require a lot of regularity for the dependence of $\V[\nu]$ with respect to $\nu$, in a compact  metric setting, it is typically asked that $\nu\mapsto \V[\nu]$ is  continuous (or at least upper-semi continuous in some sense) from the set of probabilities equipped with the weak-$*$ topology to the set of continuous functions equipped with the supremum norm. This is harmless if $Y$ is finite but extremely restrictive in general, in particular it excludes the case of a purely local dependence which is relevant to capture congestion effects (actions that are frequently played are more costly). In contrast, the variational approach will enable us to treat such local congestion effects.  If $\E$ (the primitive of $\V$ in some sense) is convex then equilibria and minimisers coincide and strict convexity gives uniqueness of the second marginal, $\nu$, of the equilibrium. Such a convexity is quite demanding in applications but we shall prove that in an euclidean setting and for a quadratic $c$ (and more generally strictly convex $c$'s in dimension one), there is some hidden convexity  (in the spirit of the seminal results of~\cite{McC2})
in the problem from which one can deduce uniqueness of equilibria but also a characterisation in terms of a nonlinear partial differential equation of Monge-Amp\`ere type. This partial differential equation cannot be solved explicitly in general but, in dimension one, it is easy to solve the variational problem numerically in an efficient way as we shall illustrate on several examples. Another advantage of the variational approach is that it allows for an elementary (in-)efficiency analysis of the equilibrium and the design of a tax system to restore the efficiency of the equilibrium (see Section~\ref{concl}). Of course, the variational approach described above presents strong similarities with the potential games of \cite{MS} and our framework is very close to that of \cite{klw} or \cite{lw1} in the case of a finite number of players; however we are not aware of any extension of the analysis of \cite{MS} to the case of a continuum of players.

Apart from our results on Cournot-Nash equilibria, another objective of the paper is to contribute to popularise the use of optimal transport in economics.  Several recent papers have fruitfully used optimal transport arguments in such different fields as hedonics and matching problems (\cite{Ekelandhedonic,chiapp}), multidimensional screening  (\cite{gcmonge,figalli}) or urban economics (\cite{gcie,BMS}). We believe that cross-fertilisation between economics and optimal transport  will rapidly develop.  This is why  we have included in Appendix~\ref{tool} some basic results from optimal transport theory which we hope can serve as a comprehensive introduction to this vast subject to an economists readership. 

The present introduction would neither be complete nor fair without an explicit reference to the mean-field games theory of~\cite{mfg1,mfg2,mfg3}. Indeed, our variational approach is largely inspired by the Lasry and Lions optimal control approach to mean-field games (that has some similarities with optimal transport), but also mean-field games theory enables to treat considerably richer situations than the somehow static one we treat here. Another line of research we would like to mention concerns congestion games (another example of potential games) and the literature on the cost of anarchy (see~\cite{rough} and the references therein), indeed the variational approach we develop presents some similarities with the variational approach to Wardrop equilibria on congested networks and in both cases equilibria are socially inefficient.  \smallskip

The paper is organised as follows. In Section~\ref{secmodel}, we introduce the model, define equilibria and emphasise some connections with optimal transport. In Section~\ref{varapp}, we adopt a  variational approach and prove that for a large class of interactions, equilibria naturally arise as local minimisers of a certain functional. Section~\ref{hidd} is devoted to further uniqueness and variational characterisation of equilibria  results thanks to notions of displacement convexity arising in optimal transport, we also characterise the equilibrium via a certain nonlinear partial differential equation and compute numerically the equilibrium in dimension one. Section \ref{concl} concludes. The proofs as well as well as a presentation of various results from optimal transport theory which are used throughout the paper are gathered in the Appendix.

\section{The equilibrium model}\label{secmodel}

The model consists of a compact metric type space $X$ equipped with a Borel probability measure $\mu\in \P(X)$, giving the distribution of types, a compact metric action space $Y$, a reference\footnote{The role of the reference  measure $m_0$ is here to capture purely local congestion effects as in the examples below. In other words, we will require action distributions to be absolutely continuous with respect to $m_0$. This departs from the common assumption that the cost is well defined for every action distribution and satisfies some strong continuity/semi-continuity  with respect to the weak $*$ topology of measures as in \cite{MasColell} or \cite{Kahn}.}    Borel non-negative measure $m_0$, a continuous function $c\in \C(X\times Y)$ and interactions are captured by a map which to every action distribution $\nu\in \P(Y)\cap \L^1(m_0)$ associates a function $\V[\nu]$ defined $m_0$-almost-everywhere. Given an action distribution $\nu$,  $x$-type agents taking action $y$ then incur the additively separable cost $c(x,y)+\V[\nu](y)$. The unknown is a probability distribution $\gamma\in \P(X\times Y)$, with the interpretation that $\gamma(A\times B)$ is the probability that an agent has her type in $A$ and takes an action in $Y$, such a $\gamma$ induces as action distribution $\nu$, its second marginal which we denote $\nu={\pi_Y}_\#\gamma$. By construction, the first marginal of $\gamma$, ${\pi_X}_\#\gamma$ should  be equal to $\mu$.  Since we will be interested by efficiency (or rather inefficiency) properties of equilibria, we will also impose that $\gamma$ has finite social cost, where the latter is given by
\begin{align*}
 {\rm{SC}}&=\iint_{X\times Y}  (c(x,y) +\V[\nu](y))\dd\gamma(x,y)\\
&=\iint_{X\times Y} c(x,y)\dd\gamma(x,y)+\int_Y \V[\nu](y) \dd\nu(y)
\end{align*}
since $c$ is continuous the first term is finite for every $\gamma$, but the second requires the action marginal $\nu$ to belong to the domain
\begin{equation}\label{domainD}
\D:=\{\nu \in \L^1(m_0)\; : \; \V[\nu] \in \L^1(\nu)\}=\{\nu \in \L^1(m_0)\; : \; \int_Y \vert \V[\nu]\vert \dd\nu<+\infty\}.
\end{equation}

Cournot-Nash equilibria are then defined as follows.
\begin{defi}
$\gamma\in \P(X\times Y)$ is a \emph{Cournot-Nash equilibria} if its first marginal is $\mu$, its second marginal, $\nu$, belongs to $\D$ and there exists $\varphi\in \C(X)$ such that 
\begin{equation}\label{equipp}
c(x,y)+\V[\nu](y)\geq \varphi(x) \mbox{ for all $x\in X$ and $m_0$-a.e. $y$ with equality $\gamma$-a.e.}.
\end{equation}
A Cournot-Nash equilibrium $\gamma$ is called \emph{pure} whenever it is carried by a graph {\it i.e.} is  of the form $\gamma=({\rm{id}} ,T)_{\#}\mu$ for some Borel map $T$ : $X\to Y$. 
\end{defi}
The previous definition is slightly different from that of \cite{MasColell} because we require the action distribution to be absolutely continuous with respect to $m_0$, so as to take into account congestion effects as explained in the examples below. This makes the existence of equilibria nontrivial, indeed, when $\nu \mapsto \V[\nu]$ is continuous  from $(\P(Y), w-$*$)$ to $(\C(Y), \Vert . \Vert_{\infty})$ (as is the case for instance when $\V[\nu](y)=\int_Y \phi(y,z) \dd\nu(z)$ with $\phi$ continuous) standard fixed-point arguments immediately give the existence of Cournot-Nash equilibria but here, we do not have such regularity.

\subsection{Examples}

\subsubsection*{Holiday choice}
Let us consider a population of agents whose location is distributed according to some probability distribution $\mu\in \P(X)$ where $X$ is some compact subset of $\R^2$ (say). These agents have to choose their holidays destination (possibly in mixed strategy). The set of possible holiday destinations is some compact subset of the plane $Y$ (it can be $X$, a finite set, ...). The commuting cost from $x$ to $y$ is $c(x,y)$. In addition to the commuting cost, agents incur costs resulting from interactions with other agents, this is captured by a map $\nu\mapsto \V[\nu]$ that can be modelled as follows. A natural effect that has to be taken into account is congestion, {\it i.e.} the fact that more crowded location results in more disutility for the agents. Congestion thus requires to consider local effects and actually imposes that $\nu$ is not too concentrated; a way to capture this is to impose that $\nu$ is absolutely continuous with respect to some reference probability measure $m_0$. Still denoting by $\nu$ the Radon-Nikodym derivative of $\nu$, a natural congestion cost is of the form $y\mapsto f(\nu(y))$ with $f$ non-decreasing. In addition to the negative externality due to congestion effect, there may be a positive externality effect due to the positive social interactions between agents  which can be captured through a non-local term of the form $y\mapsto \int_Y \phi(y,z) \dd\nu(z)$ where for instance $\phi(y,.)$ is minimal for $z=y$ so that the previous term represents a cost for being far from the rest of the population. 
Finally, the presence of purely geographical factors (e.g.  distance to the sea) can be reflected by a term of the form $y \mapsto v(y)$. The total externality cost generated by the distribution $\nu$ combines the three effects of congestion, positive interactions and geographical factors and can then be taken of the form 
\[\V[\nu](y)= f( \nu(y))+\int_Y \phi(y,z) \dd\nu(z) + v(y).\]

\subsubsection*{Technological choice}

Consider now a simple model of technological choice in the presence of externalities. There is a set of consumers indexed by a type $x\in X$ drawn according to the probability $\mu$, and a set of technologies $Y$ for a certain good (cell-phone, computer, tablet...). On the supply side, assume there is a single profit maximising profit firm with convex production cost $F(y,.)$ producing technology $y$, the supply (equals demand at equilibrium) of this firm is thus determined by the marginal pricing rule $p(y)=\partial_\nu F(y, \nu(y))$.  Agents aim to minimise with respect to $y$ a total cost which is the sum of their individual purchasing cost $c(x,y)+p(y)=c(x,y)+\partial_\nu F(y, \nu(y))$ and an additional usage/maintenance or accessibility cost which is positively affected by the number of consumers having purchased similar technologies {\it i.e.} a term of the form $\int_Y \phi(y,z) \dd\nu(z)$ where $\phi$ is increasing in the distance between technologies $y$ and $z$.

\subsection{Connection with optimal transport and purity of equilibria}

For $\nu\in \P(Y)$, let $\Pi(\mu, \nu)$ denote the set of probability measures on $X\times Y$ having $\mu$ and $\nu$ as marginals and let $\W_c(\mu, \nu)$ be the least cost of transporting $\mu$ to $\nu$ for the cost $c$ {\it i.e.} the value of the Monge-Kantorovich optimal transport problem:
\[\W_c(\mu, \nu):=\inf_{\gamma\in \Pi(\mu, \nu)}  \iint_{X\times Y} c(x,y) \dd\gamma(x,y)\]
let us also denote by $\Pi_o(\mu,\nu)$ the (nonempty) set of optimal transport plans {\it i.e.}
\[\Pi_o(\mu,\nu):=\{\gamma\in \Pi(\mu, \nu) \; : \; \iint_{X\times Y} c(x,y) \dd\gamma(x,y)=\W_c(\mu, \nu)\}.\]

A first link between Cournot-Nash equilibria and optimal transport is based on the following straightforward observation.

\begin{lem}
 If $\gamma$ is a Cournot-Nash equilibrium and $\nu$ denotes its second marginal then $\gamma\in \Pi_o(\mu, \nu)$.
 \end{lem}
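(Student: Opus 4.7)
The plan is to recognise the equilibrium condition~\eqref{equipp} as a Kantorovich-type duality statement, which immediately implies optimality. Set $\psi := -\V[\nu]$, so that~\eqref{equipp} rewrites as
\[
 c(x,y) \ge \varphi(x) + \psi(y) \qtext{for all $x\in X$ and $m_0$-a.e. $y\in Y$,}
\]
with equality $\gamma$-a.e. The pair $(\varphi,\psi)$ is the natural candidate for a pair of Kantorovich potentials associated with the transport between $\mu$ and $\nu$.

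Next, I would check that these inequalities can be integrated against any competitor $\tilde\gamma \in \Pi(\mu,\nu)$. The key observation is that $\nu \in \D$ is absolutely continuous with respect to $m_0$, so the exceptional $m_0$-null set in the $y$-variable is also null for the second marginal of $\tilde\gamma$, and hence $\tilde\gamma$-null after taking the product with $X$. Consequently the pointwise inequality $c(x,y)\ge \varphi(x)+\psi(y)$ holds $\tilde\gamma$-a.e. Integrability is not an issue: $c$ is continuous on the compact set $X\times Y$ hence bounded, $\varphi\in \C(X)$ is bounded, and $\psi=-\V[\nu]\in \L^1(\nu)$ by the definition of $\D$.

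Now I would integrate against $\tilde\gamma$ and against $\gamma$ respectively. For any $\tilde\gamma\in\Pi(\mu,\nu)$,
\[
 \iint_{X\times Y} c(x,y)\dd\tilde\gamma(x,y) \ \ge\ \int_X \varphi\dd\mu + \int_Y \psi\dd\nu,
\]
since $\tilde\gamma$ has marginals $\mu$ and $\nu$. On the other hand, by the equality case in~\eqref{equipp} which holds $\gamma$-a.e., and using that $\gamma$ also has marginals $\mu$ and $\nu$,
\[
 \iint_{X\times Y} c(x,y)\dd\gamma(x,y) \ =\ \int_X \varphi\dd\mu + \int_Y \psi\dd\nu.
\]
Combining the two lines yields $\iint c\dd\gamma \le \iint c\dd\tilde\gamma$ for every $\tilde\gamma\in\Pi(\mu,\nu)$, which is precisely the statement $\gamma\in \Pi_o(\mu,\nu)$.

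The only delicate point, and the one I would be most careful about, is the $m_0$-a.e. versus $\tilde\gamma$-a.e. passage in the second step: it relies crucially on the modelling choice that equilibrium action distributions are required to lie in $\L^1(m_0)$, so that every competitor in $\Pi(\mu,\nu)$ sees its $y$-marginal dominated by $m_0$. Everything else is a routine consequence of integrating a pointwise duality inequality.
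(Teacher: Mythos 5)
Your proof is correct and takes essentially the same route as the paper's: you integrate the duality inequality $c(x,y)\ge \varphi(x)-\V[\nu](y)$ against an arbitrary competitor in $\Pi(\mu,\nu)$ and use the equality case along $\gamma$, exactly as the paper does with its competitor $\eta$. The only difference is that you spell out the technical points the paper's one-line computation leaves implicit, namely that $\nu\ll m_0$ lets the $m_0$-a.e.\ inequality hold $\tilde\gamma$-a.e.\ for every competitor, and that $\V[\nu]\in\L^1(\nu)$ (from $\nu\in\D$) justifies the integration.
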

 \begin{proof}
  Indeed, let $\varphi\in \C(X)$ be such that~\eqref{equipp} holds and let $\eta\in \Pi(\mu, \nu)$ then we have 
\begin{multline*}
 \iint_{X\times Y} c(x,y) \dd\eta(x,y)\geq  \iint_{X\times Y} (\varphi(x)-\V[\nu](y)) \dd\eta(x,y)\\
 =\int_X \varphi(x) \dd\mu(x)-\int_Y \V[\nu](y) \dd\nu(y)
 = \iint_{X\times Y} c(x,y) \dd\gamma(x,y)
 \end{multline*} 
so that $\gamma\in \Pi_o(\mu, \nu)$.
\end{proof} 
The previous proof also shows that $\varphi$ solves the dual of $\W_c(\mu, \nu)$ (see Appendix~\eqref{dualkanto}) {\it i.e.} maximises the functional
\[\int_X \varphi(x) \dd\mu(x) +\int_Y \varphi^c(y) \dd\nu(y)\]
where $\varphi^c$ denotes the $c$-transform of $\varphi$ {\it i.e.}
\begin{equation}\label{ctransffi}
\varphi^c(y):=\min_{x\in X} \{c(x,y)-\varphi(x)\}\;.
\end{equation}

 In an euclidean setting, there are well-known conditions on $c$ and $\mu$ which guarantee that such an optimal $\gamma$ necessarily is pure whatever $\nu$ is. It is the case for instance if $\mu$ is absolutely continuous with respect to the Lebesgue measure, $c(x,y)$ is a smooth and strictly convex function of $x-y$ (see  \cite{mcgan} who extended the seminal results of \cite{bre} in the quadratic cost case), or more generally, when it satisfies a generalised Spence-Mirrlees condition (see \cite{gcmonge} for details):

\begin{coro}\label{purityofCNE}
Assume that $X=\clos{\Omega}$ where $\Omega$ is some open connected bounded subset of $\R^d$ with negligible boundary, that $\mu$ is absolutely continuous with respect to the Lebesgue measure, that $c$ is differentiable with respect to its first argument, that $\nabla_x c$ is continuous on $\R^d\times Y$ and that it satisfies the generalised Spence-Mirrlees condition: 
\[\mbox{for every $x\in X$, the map $y\in Y\mapsto \nabla_x c(x,y)$ is injective,}\]
then for every $\nu\in \P(Y)$, $\Pi_o(\mu, \nu)$ consists of a single element and the latter is of the form $\gamma=({\rm{id}}, T)_\#\mu$ hence every Cournot-Nash equilibrium is pure. 
\end{coro}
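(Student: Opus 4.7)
The plan is to run a Brenier-Gangbo-McCann style argument, extended to the generalised Spence-Mirrlees setting in the spirit of \cite{gcmonge}. First I would fix an arbitrary $\nu \in \P(Y)$ and invoke standard Kantorovich duality (cf.\ \cite{villani2}) to produce a $c$-concave dual maximiser $\varphi = (\varphi^c)^c$ of $\W_c(\mu,\nu)$, together with the fact that every optimal $\gamma \in \Pi_o(\mu,\nu)$ is concentrated on the contact set
\[\Gamma := \{(x,y) \in X \times Y : c(x,y) - \varphi(x) - \varphi^c(y) = 0\}.\]
Existence of such a $\gamma$ is routine from weak-$*$ compactness of $\Pi(\mu,\nu)$ and continuity of $c$.

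The key regularity step is to observe that, because $\nabla_x c$ is continuous on the compact set $X \times Y$, the family $\{c(\cdot, y) - \varphi^c(y)\}_{y \in Y}$ is uniformly Lipschitz in $x$, so its pointwise infimum $\varphi$ is Lipschitz on $X$. Combining Rademacher's theorem with $\mu \ll \mathrm{Leb}$ and the negligibility of $\partial \Omega$, $\varphi$ is differentiable at $\mu$-almost every $x \in \Omega$. For such an $x$ and any $y$ with $(x,y) \in \Gamma$, the map $z \mapsto c(z,y) - \varphi(z) - \varphi^c(y)$ is non-negative on $X$ and vanishes at the interior point $z = x$, so the first-order optimality condition gives
\[\nabla \varphi(x) = \nabla_x c(x,y).\]

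The generalised Spence-Mirrlees assumption now says that $y \mapsto \nabla_x c(x,y)$ is injective, so $y$ is uniquely determined by $\nabla\varphi(x)$ and hence by $x$; a standard Borel selection produces a map $T : \Omega \to Y$ such that every $\gamma \in \Pi_o(\mu,\nu)$ coincides with $(\mathrm{id}, T)_\#\mu$, which gives both uniqueness of the optimal plan and its purity. The statement about Cournot-Nash equilibria then follows immediately from the previous lemma: any equilibrium $\gamma$ with second marginal $\nu$ belongs to $\Pi_o(\mu,\nu)$, so $\gamma = (\mathrm{id}, T)_\#\mu$.

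I expect the main obstacle to be the careful assembly of the three ingredients that yield the pointwise first-order condition on a full-$\mu$-measure set: $c$-concavity of $\varphi$ (so it inherits Lipschitz regularity from $c$), absolute continuity of $\mu$ (to invoke Rademacher), and negligibility of $\partial \Omega$ (so that $\mu$-almost every $x$ is in the interior and becomes an unconstrained critical point). Once the identity $\nabla \varphi(x) = \nabla_x c(x,y)$ is secured $\mu$-almost everywhere on $\Gamma$, Spence-Mirrlees finishes the argument in one line.
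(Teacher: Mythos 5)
Your proof is correct and follows exactly the standard argument that the paper itself does not spell out but delegates by citation to \cite{bre}, \cite{mcgan} and \cite{gcmonge}: duality gives a $c$-concave potential, Lipschitz regularity plus Rademacher plus $\mu\ll\mathrm{Leb}$ and the negligible boundary give the first-order condition $\nabla\varphi(x)=\nabla_x c(x,y)$ on the contact set, and the twist (generalised Spence-Mirrlees) condition pins down $y=T(x)$, forcing every optimal plan to coincide with $(\mathrm{id},T)_\#\mu$. The only cosmetic remark is that no genuine Borel selection is needed, since $T(x)$ is uniquely determined by injectivity and its $\mu$-measurability follows from the standard disintegration argument for plans concentrated on a graph.
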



\section{A variational approach}\label{varapp}
In this section, we will see that in many relevant cases, one may obtain equilibria by the minimisation of some functional over a set of probability measures\footnote{Note the analogy with the variational approach of \cite{MS} for potential games, {\it i.e.} games whose equilibria can be obtained by minimising some potential function.}. The main assumption for this variational approach to be valid is that the interaction map $\V[\nu]$ has the structure of a differential {\it i.e.} that $\V[\nu]$ can be seen as the first variation of some function $\nu\mapsto \E[\nu]$. In this case, the variational  approach is based on the observation that the equilibrium condition is the first-order optimality condition for the minimisation of $\W_c(\mu, \nu)+\E[\nu]$.
\subsection{Interaction maps which are differentials}
The main assumption for the variational approach to be valid is that $\nu\mapsto \V[\nu]$ is a differential in the following sense:
\begin{defi}[Differential]\label{vhasapot}
Let $\D$ be defined by~\eqref{domainD}.  The map $\nu\in  \D\mapsto \V[\nu]$ is a \emph{differential} on $\D$ if $\D$ is convex and there exists $\E$: $\D \to \R$ such that for every $(\rho, \nu)\in  \D^2$, $\V[\nu]\in \L^1(\rho)$ and 
\[\lim_{\eps\to 0^+} \frac{\E[(1-\eps)\nu+\eps\rho]-\E[\nu]}{\eps}=\int_{Y} \V[\nu] \dd(\rho-\nu)\]
{\it i.e.} $\V[\nu]$ is the first variation of $\E$ which we denote $\V[\nu] =\dfrac{\delta \E}{\delta \nu}$. 
\end{defi}

Before going any further, let us consider some examples to illustrate the previous definition. 
\subsubsection*{Local term}
Let us consider first the case of a local dependence, again $m_0$ is our reference measure and for $\nu\in \D:=\P(Y)\cap \L^1(m_0)$ and $m_0$-a.e. $y$:
\[\V[\nu](y)=f(y, \nu(y))\]
for some continuous $f$. Assume first that $f$ is bounded and define, for all $\nu\in \D$:
\begin{equation}\label{primit}
F(y, \nu)=\int_0^\nu f(y, s)\dd s, \quad \E[\nu]=\int_Y F(y, \nu(y)) \dd m_0(y)\;.
\end{equation}
Then since $F$ is Lipschitz in $\nu$ uniformly in $y$, it easily follows from Lebesgue's dominated convergence theorem that  $\V[\nu]$ is the differential of $\E$ on $\D$. Now, rather assume that  $f$ satisfies the growth condition
\begin{equation}\label{growth1}
a(\nu^{\alpha} -1)\leq  f(y, \nu) \leq b(\nu^{\alpha}+1)
\end{equation}
for some $a\geq 0$, $b>0$, $\alpha>0$, $m_0$-a.e. $y$ and every $\nu\geq 0$. For $p=\alpha+1$, the corresponding energy functional $\E$ is then defined for all $\nu\in \D:=\P(Y)\cap \L^p(m_0)$ as above by~\eqref{primit}. Thanks to~\eqref{growth1} and Lebesgue's dominated convergence theorem, $\V$ is the differential of $\E$ on $\D$,  $\V[\nu]\in \L^{p'}(m_0)$ (with $p'$ the conjugate exponent of $p$ {\it i.e.} $p'=p/(p-1)$) as soon as $\nu\in \D$. Apart from the technical growth condition (which is useful to apply Lebesgue's theorem and guarantee that $\V[\nu] \nu$ is integrable) we therefore see that local $\V$'s are differentials. In Section~\ref{inada}, we will treat local $\V$'s under a different Inada-like condition on $f$ which is more customary in economics and will ensure that $\nu$ remains positive hence simplifying the equilibrium/optimality condition. 
\subsubsection*{Non-local interaction term and the role of symmetry}
Let us now consider the  case of (pairwise) interactions where $\V[\nu]$ is defined  by 
\[\V[\nu](y)=\int_Y \phi(y,z) \dd\nu(z)\]
for some $\phi\in \C(Y\times Y)$. It is then natural to define the quadratic functional
\[\E[\nu]=\frac{1}{2} \iint_{Y\times Y} \phi(y,z) \dd\nu(y) \dd\nu(z)\;.\]
By expanding in $\eps$, $\E[\nu+\eps(\rho-\nu)]$, its differential is immediate to compute
\begin{align*}
  \lim_{\eps \to 0} &\frac{\E[\nu +\eps(\rho-\nu)]-\E[\nu]}{\eps}\\
&\qquad=\frac12\iint \phi(y,z) [\!\dd\nu(y) \dd(\rho-\nu)(z) + \dd\nu(z) \dd(\rho-\nu)(y)]\\
&\qquad=\frac12\iint [\phi(y,z) + \phi(z,y)]  \dd\nu(z) \dd(\rho-\nu)(y)\;.
\end{align*}
So that
\[\frac{\delta \E}{\delta \nu}(y)=\int_Y \phi^{\rm{sym}}(y,z) \dd\nu(z) \; : \; \phi^{\rm{sym}}(y,z)=\frac{\phi(y,z)+ \phi(z,y)}{2}.\]
Hence $\V$ is the differential of $\E$ on $\P(Y)$ as soon as $\phi$ is symmetric\footnote{Let us remark that in the case of a finite number of players, the role of symmetry for the potential approach to work was already pointed out in \cite{lw1}.} {\it i.e.} $\phi(y,z)=\phi(z,y)$ (which is the case for instance if $\phi$ is the function of the distance between $y$ and $z$). Note that the assumption that $\V$ is a differential requires $\phi$ to be symmetric.\footnote{In a similar way, if we consider the case of higher-order interactions  
\[
\V[\nu](y)=\int_{Y^m} \phi(y,.) \dd\nu^{\otimes m}=\int_{Y^m} \phi(y,z_1,\ldots, z_m)\dd\nu(z_1) \ldots \dd\nu(z_m)
\]
where  $\phi\in \CC(Y^{m+1})$ satisfies the symmetry relations 
\begin{equation}\label{symm}
\phi(y,z_1,\ldots, z_m) =\phi(z_1,y,\ldots, z_m)=\cdots=\phi(z_m, z_1,\ldots, y),  
\end{equation}
for all $(y,z_1,\ldots, z_m)\in Y^{m+1}$, then $\V$ is the differential of 
\begin{align*}
 \E[\nu]&=\frac{1}{m+1}\int_{Y^{m+1}} \phi   \dd\nu^{\otimes (m+1)}. 
\end{align*}}
 Of course, one can combine the previous examples and consider a $\V$ which is the sum of a symmetric interaction term and a local term, such $\V$'s still have the structure of a differential. 
\subsection{Minimisers are equilibria}
Throughout this paragraph, we assume that
\begin{equation}\label{vgrad}
\V[\nu] =\frac{\delta \E}{\delta \nu} \mbox{ on $\D$}.
\end{equation}
We then consider the variational problem 
\begin{equation}\label{varequil}
\inf_{\nu\in \D} \J_\mu[\nu]\quad\mbox{where}\quad \J_\mu[\nu]:=\W_c(\mu, \nu)+\E[\nu].
\end{equation}
To prove that minimisers of~\eqref{varequil} are equilibria, we first need to be able to differentiate the term $\W_c(\mu, \nu)$ with respect to $\nu$, this is possible thanks to Lemma~\ref{diffkanto} proved in Appendix~\ref{tool} but it requires more structure on $X$, $\mu$ and $c$: in particular $X$ is a connected subset of $\R^d$, $c$ is differentiable with respect to $x$ and $\mu$ is equivalent to the Lebesgue measure.\smallskip
\begin{thm}[Minimisers are equilibria]\label{miniareeq}
Assume that $\V[\nu]$ satisfies~\eqref{vgrad}  with $\D=\P(Y)\cap \L^p(m_0)$ for some $p\in[1,+\infty[$ and that the assumptions of {\rm Lemma~\ref{diffkanto}} hold true. If
 $\nu$ solves~\eqref{varequil} and $\gamma \in \Pi_o(\mu,\nu)$ then $\gamma$ is a Cournot-Nash equilibrium.
\end{thm}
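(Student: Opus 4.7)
The strategy is to write the first-order optimality condition for \eqref{varequil} along admissible perturbations, then to pass from its integral form to the pointwise equilibrium inequality. Fix any $\rho \in \D$; by convexity of $\D$, the curve $\nu_\eps := (1-\eps)\nu + \eps \rho$ lies in $\D$ for $\eps \in [0,1]$. Differentiating $\J_\mu[\nu_\eps]$ at $\eps = 0^+$, the contribution of $\E$ equals $\int_Y \V[\nu]\,\dd(\rho-\nu)$ by Definition~\ref{vhasapot}, while the contribution of $\W_c$ is precisely what Lemma~\ref{diffkanto} is designed to compute: under its hypotheses on $X$, $\mu$ and $c$, the right derivative equals $\int_Y \varphi^c\,\dd(\rho-\nu)$, where $\varphi \in \C(X)$ is the Kantorovich potential associated with any $\gamma \in \Pi_o(\mu,\nu)$ and $\varphi^c$ is its $c$-transform \eqref{ctransffi}.

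The minimality of $\nu$ then yields, for every $\rho \in \D$,
\[
\int_Y \bigl(\varphi^c + \V[\nu]\bigr)\,\dd\rho\ \geq\ K\ :=\ \int_Y\bigl(\varphi^c + \V[\nu]\bigr)\,\dd\nu.
\]
Next I would extract a pointwise inequality by testing against the probabilities $\rho_A := \un_A\, m_0 / m_0(A)$ associated with Borel sets $A \subset Y$ of positive $m_0$-measure; these lie in $\D = \P(Y)\cap \L^p(m_0)$ because their densities are bounded. The resulting estimate $\int_A(\varphi^c + \V[\nu])\,\dd m_0 \geq K\,m_0(A)$ for every such $A$ implies $\varphi^c + \V[\nu] \geq K$ $\,m_0$-a.e.\ by a standard measure-theoretic argument (and note that $\V[\nu] \in \L^1(m_0)$ because the normalised $m_0$ itself belongs to $\D$). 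Since $\nu \ll m_0$, this inequality also holds $\nu$-a.e., and integrating against $\nu$ forces equality $\nu$-a.e.

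To conclude, set $\tilde\varphi := \varphi + K \in \C(X)$. Using $\varphi^c(y) \leq c(x,y) - \varphi(x)$ (which is immediate from \eqref{ctransffi}), the $m_0$-a.e.\ inequality above rewrites as $c(x,y) + \V[\nu](y) \geq \tilde\varphi(x)$ for every $x \in X$ and $m_0$-a.e.\ $y$. Equality $\gamma$-a.e.\ then follows by combining: (a) Kantorovich duality for $\gamma \in \Pi_o(\mu,\nu)$ forces $\varphi(x) + \varphi^c(y) = c(x,y)$ $\gamma$-a.e.; and (b) the equality $\varphi^c(y) + \V[\nu](y) = K$ holds $\nu$-a.e., hence $\gamma$-a.e.\ since $\nu = {\pi_Y}_\#\gamma$. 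I expect the delicate step to be (i), the differentiation of $\W_c(\mu,\nu_\eps)$ at $\eps = 0^+$ and its identification with $\int_Y \varphi^c\,\dd(\rho-\nu)$; this is precisely the content of Lemma~\ref{diffkanto} and motivates the structural hypotheses on $\mu$ and $c$ needed to secure uniqueness of Kantorovich potentials. Step~(ii), the passage from the integral variational inequality to the pointwise a.e.\ bound, is routine once $\D$ is seen to contain every probability with bounded density with respect to $m_0$.
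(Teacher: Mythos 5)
Your proposal is correct and follows essentially the same route as the paper's proof: the first-order condition combining Lemma~\ref{diffkanto} with the differential property of $\E$, extraction of the pointwise bound $\varphi^c+\V[\nu]\geq K$ $m_0$-a.e.\ (with equality $\nu$-a.e.) by testing against probabilities with bounded density, and finally Kantorovich duality ($\varphi+\varphi^c=c$ $\gamma$-a.e.\ on the support of the optimal plan) to obtain the equilibrium condition with $\tilde\varphi=\varphi+K$. The only cosmetic difference is that the paper names the constant as the essential infimum $M$ of $\varphi^c+\V[\nu]$ rather than your $K=\int_Y(\varphi^c+\V[\nu])\,\dd\nu$, but these coincide, and your explicit indicator-density argument merely spells out the step the paper compresses into ``minimising the left-hand side with respect to $\L^p$ probabilities.''
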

See Appendix~\ref{aminiareeq} for the proof. Let us mention however that the optimality condition for ~\eqref{varequil} is the following: there is a constant $M$ such that
\begin{equation}\label{eq:optcond}
\left\{
  \begin{array}{ll}
    \varphi^c+\V[\nu] \geq M\quad&\mbox{}\vspace{.1cm}\\
\varphi^c+\V[\nu]= M\quad&\mbox{$\nu$-a.e.}\;,
  \end{array}
\right.
\end{equation}
where $\varphi^c$ is the $c$-transform of $\varphi$ as in~\eqref{ctransffi}.


To deduce an existence result from Theorem \ref{varequil}, assume that $\V[\nu]$ is defined for $\nu\in \P(Y)\cap \L^1(m_0)$ by
\begin{equation}
  \label{eq:vou}
  \V[\nu](y):=f(y, \nu(y))+ \int_{Y} \phi(y,z) \dd\nu(z)
\end{equation}
where $\phi\in \C(Y\times Y)$ is symmetric,  $f$ is continuous and non-decreasing with respect to its second argument and satisfies the growth condition~\eqref{growth1}
for some $a>0$, $b>0$ and $\alpha>0$. For $p=\alpha+1$, the corresponding energy functional is then defined for all $\nu\in \D:=\P(Y)\cap \L^p(m_0)$ by
\[\E[\nu]=\int_{Y} F(y, \nu(y))\dd m_0(y)+ \frac{1}{2}\iint_{Y^2} \phi(y,z) \dd\nu(y) \dd\nu(z)\]
where $F$ is defined by~\eqref{primit}. The functional $F(y,.)$ is convex and satisfies the growth condition
\begin{equation*}a(p^{-1}\nu^p-\nu)\leq F(y, \nu)\leq b(p^{-1} \nu^p+\nu)\;.\end{equation*}
Hence $\V[\nu]\in \L^{p'}(m_0)$ as soon as $\nu\in \D$ and thus, by H\"older's inequality, $\V[\nu] \rho\in \L^1(m_0)$ for every $\rho$ and $\nu$ in $\D$. 
\begin{coro}[Existence of equilibria by minimisation]\label{coroexemple}
 Assume that the assumptions of {\rm Lemma~\ref{diffkanto}} hold, that $\nu\mapsto \V[\nu]$ is of the form~\eqref{eq:vou} where $f$ and $\phi$ satisfy the assumptions above, then~\eqref{varequil} admits minimisers in $\P(Y)\cap \L^p(m_0)$ so that there exists Cournot-Nash equilibria.
\end{coro}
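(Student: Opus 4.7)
The plan is to apply the direct method of the calculus of variations to the functional $\J_\mu$ on $\D = \P(Y)\cap \L^p(m_0)$, and then invoke Theorem~\ref{miniareeq} to conclude existence of a Cournot-Nash equilibrium. First, I would establish coercivity: using the lower bound $F(y,\nu)\geq a(p^{-1}\nu^p - \nu)$ already recorded in the excerpt, together with the normalization $\int_Y \nu\,\dd m_0 = 1$ (since $\nu$ has density $\nu$ with respect to $m_0$), the local part of $\E$ is bounded below by $ap^{-1}\|\nu\|_{\L^p(m_0)}^p - a$. The transport term satisfies $\W_c(\mu,\nu)\geq \min_{X\times Y} c$ and the symmetric interaction term is bounded below by $\frac{1}{2}\min_{Y\times Y}\phi$, both finite since $X$, $Y$ are compact and $c$, $\phi$ continuous. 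Thus $\J_\mu[\nu]\geq ap^{-1}\|\nu\|_{\L^p(m_0)}^p + C$ for some constant $C$, which both shows $\inf\J_\mu > -\infty$ and that any minimizing sequence $(\nu_n)$ is bounded in $\L^p(m_0)$.

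Next, since $p = \alpha+1 > 1$, reflexivity of $\L^p(m_0)$ yields a subsequence (still denoted $\nu_n$) converging weakly in $\L^p(m_0)$ to some $\nu_\infty$. I would check that $\nu_\infty \in \D$: non-negativity and the total-mass constraint $\int \nu_\infty\,\dd m_0 = 1$ are preserved because $m_0(Y)<+\infty$ (so constants and all continuous functions on $Y$ lie in $\L^{p'}(m_0)$), and the $\L^p$-bound passes to the limit by lower semicontinuity of the norm.

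The heart of the argument is lower semicontinuity of the three pieces under weak $\L^p(m_0)$-convergence. For the local term $\nu\mapsto \int_Y F(y,\nu(y))\dd m_0(y)$: since $f(y,\cdot)$ is non-decreasing, $F(y,\cdot)$ is convex, and the growth control $F(y,\nu)\leq b(p^{-1}\nu^p+\nu)$ makes this a standard convex integral functional on $\L^p(m_0)$, which is weakly lower semicontinuous. For the transport term $\W_c(\mu,\cdot)$: weak $\L^p(m_0)$-convergence of densities implies weak-$*$ convergence of the corresponding measures $\nu_n\weakstarto \nu_\infty$ in $\P(Y)$ (test against continuous functions, which belong to $\L^{p'}(m_0)$), and $\W_c(\mu,\cdot)$ is weak-$*$ lower semicontinuous on $\P(Y)$ (in fact continuous for $c$ continuous on the compact $X\times Y$, by standard stability of optimal transport). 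For the non-local quadratic term, weak-$*$ convergence of $\nu_n$ on the compact $Y$ implies weak-$*$ convergence of the product measures $\nu_n\otimes\nu_n$ on $Y\times Y$, so $\iint\phi\,\dd\nu_n\,\dd\nu_n \to \iint\phi\,\dd\nu_\infty\,\dd\nu_\infty$ by continuity of $\phi$ on the compact $Y\times Y$.

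Combining these three facts, $\J_\mu[\nu_\infty]\leq \liminf_n \J_\mu[\nu_n] = \inf_\D \J_\mu$, so $\nu_\infty$ is a minimizer. Picking any $\gamma\in \Pi_o(\mu,\nu_\infty)$ (which is nonempty by standard existence for the Kantorovich problem) and applying Theorem~\ref{miniareeq} yields a Cournot-Nash equilibrium. The main delicate point I anticipate is the conversion from weak $\L^p(m_0)$-convergence to weak-$*$ convergence of probability measures, since the two lower semicontinuity arguments we need (for $\W_c$ and for the interaction term) are most natural in the weak-$*$ topology; this is however a short verification using the inclusion $\C(Y)\subset \L^{p'}(m_0)$ valid because $m_0$ is finite on the compact space $Y$.
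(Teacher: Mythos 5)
Your proposal is correct and follows essentially the same route as the paper's own proof: the direct method with a minimising sequence bounded in $\L^p(m_0)$ by the growth condition~\eqref{growth1}, weak $\L^p$ compactness (hence weak-$*$ convergence in $\P(Y)$), lower semicontinuity of the local term by convexity of $F(y,\cdot)$, continuity of the interaction term by continuity of $\phi$, lower semicontinuity of $\W_c(\mu,\cdot)$ via Kantorovich duality/stability, and finally Theorem~\ref{miniareeq} applied to any $\gamma\in \Pi_o(\mu,\nu)$. The only difference is that you make explicit some steps the paper leaves implicit (the coercivity bound, the verification that the weak limit is a probability density, and the embedding $\C(Y)\subset \L^{p'}(m_0)$ converting weak $\L^p$ convergence into weak-$*$ convergence), which is sound but not a distinct argument.
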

The proof is given in Appendix~\ref{acoroexemple}. Note that this in particular provides existence of equilibria results for the holiday and technological choice model examples above.  
\begin{rem}
Under the assumptions of the previous corollary, one can prove that the minimisers are actually bounded: indeed let $\nu$ be such a minimiser either $\nu(y)=0$ or $\nu(y)>0$ and for $m_0$-a.e. such points by the optimality condition~\eqref{eq:optcond} and~\eqref{growth1} one should have for some constant $M$
\[ a(\nu(y)^\alpha -1) \leq f(y, \nu(y))= M-\varphi^c(y)-\int_{Y} \phi(y,z) \dd\nu(z)\;.\]
Since $\varphi$ is a $c$-transform, it is continuous hence bounded on $Y$ and the integral term is bounded since $\phi$ is. We therefore have $\nu \in \L^{\infty}(m_0)$.  
\end{rem}

Let us now emphasise the role of convexity in the variational approach. As expected if $\E$ is convex, then finding equilibria and minimising $\J_\mu$ are equivalent:
\begin{prop}[Equivalence in the convex case]\label{equiveqmin}
Assume that the assumptions of {\rm Theorem~\ref{miniareeq}} are satisfied. If $\E$ is convex on $\D$ then the following statements are equivalent:
\begin{itemize}
\item $\nu$ solves~\eqref{varequil} and $\gamma \in \Pi_o(\mu,\nu)$,
\item $\gamma$ is an equilibrium and $\nu={\Pi_Y}_\# \gamma$.
\end{itemize}
\end{prop}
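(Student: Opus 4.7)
The plan is to split the equivalence into two implications; one direction is already delivered by Theorem~\ref{miniareeq}, so the real work lies in the converse.

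For the forward direction, suppose $\nu$ solves~\eqref{varequil} and $\gamma\in\Pi_o(\mu,\nu)$. Theorem~\ref{miniareeq} applies verbatim and yields that $\gamma$ is a Cournot-Nash equilibrium; its second marginal is $\nu$ by construction. So there is nothing more to do here.

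For the converse, assume $\gamma$ is an equilibrium and set $\nu={\pi_Y}_\#\gamma$. I would extract the potential $\varphi\in\C(X)$ of the equilibrium definition, and rewrite the condition $c(x,y)+\V[\nu](y)\geq\varphi(x)$ (with equality $\gamma$-a.e.) by minimising over $x$: this gives
\[
\V[\nu](y)+\varphi^c(y)\geq 0\quad m_0\text{-a.e.},\qquad \V[\nu](y)+\varphi^c(y)=0\quad \nu\text{-a.e.},
\]
which is exactly the optimality condition~\eqref{eq:optcond} with $M=0$. Reading the equality $\gamma$-a.e.\ as $c(x,y)=\varphi(x)-\V[\nu](y)$ and integrating against $\gamma$ also gives $\W_c(\mu,\nu)=\int\varphi\,d\mu+\int\varphi^c\,d\nu$, so that $\varphi$ is optimal in the Kantorovich dual for $\W_c(\mu,\nu)$; in particular $\gamma\in\Pi_o(\mu,\nu)$ (which is already Lemma~\ref{purityofCNE}'s predecessor).

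Now I would pick an arbitrary competitor $\rho\in\D$ and lower-bound $\J_\mu[\rho]$. Kantorovich duality yields $\W_c(\mu,\rho)\geq\int\varphi\,d\mu+\int\varphi^c\,d\rho$. Convexity of $\E$ provides the subgradient inequality
\[
\E[\rho]-\E[\nu]\;\geq\;\int_Y\V[\nu]\,d(\rho-\nu),
\]
which is obtained by dividing $\E[(1-\eps)\nu+\eps\rho]-\E[\nu]\leq\eps(\E[\rho]-\E[\nu])$ by $\eps$ and letting $\eps\to0^+$ via Definition~\ref{vhasapot}. Adding these two bounds and collecting terms,
\[
\J_\mu[\rho]\;\geq\;\int\varphi\,d\mu+\E[\nu]-\int\V[\nu]\,d\nu+\int_Y\bigl(\varphi^c+\V[\nu]\bigr)\,d\rho.
\]
Since $\rho\ll m_0$, the pointwise inequality $\varphi^c+\V[\nu]\geq 0$ forces the last integral to be non-negative, while $\varphi^c+\V[\nu]=0$ $\nu$-a.e.\ together with the identity for $\W_c(\mu,\nu)$ above shows that the remaining terms equal $\W_c(\mu,\nu)+\E[\nu]=\J_\mu[\nu]$. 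Hence $\J_\mu[\rho]\geq\J_\mu[\nu]$ for every $\rho\in\D$, so $\nu$ minimises~\eqref{varequil}, completing the equivalence.

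The main obstacle is purely bookkeeping: making sure that every integral in the chain makes sense (using $\nu,\rho\in\D$ and hence $\V[\nu]\rho,\V[\nu]\nu\in\L^1(m_0)$), and that the $m_0$-a.e.\ inequality $\varphi^c+\V[\nu]\geq 0$ transfers to a $\rho$-a.e.\ inequality via $\rho\ll m_0$. Everything else is a direct combination of Kantorovich duality, the subgradient inequality for the convex functional $\E$, and the $c$-transform reformulation of the equilibrium condition.
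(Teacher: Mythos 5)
Your proof is correct and takes essentially the same route as the paper's own argument in Appendix~\ref{aequiveqmin}: the forward implication is delegated to Theorem~\ref{miniareeq}, and the converse combines the $c$-transform reformulation of the equilibrium condition (i.e.\ \eqref{eq:optcond} with $M=0$), Kantorovich duality applied to an arbitrary competitor $\rho\in\D$, and the subgradient inequality $\E[\rho]-\E[\nu]\geq\int_Y\V[\nu]\,\dd(\rho-\nu)$ obtained from convexity of $\E$ via Definition~\ref{vhasapot}, with the $m_0$-a.e.\ inequality transferred to $\rho$ through $\rho\ll m_0$. Your explicit verification that the equilibrium potential is optimal in the dual and that $\gamma\in\Pi_o(\mu,\nu)$ merely makes visible what the paper imports from its first (unlabelled) lemma in Section~\ref{secmodel}, so there is no substantive difference.
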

If moreover $\E$ is strictly convex the following uniqueness result holds:
\begin{coro}[Uniqueness in the strictly convex case]\label{corou}
Assume that the assumptions of {\rm Theorem~\ref{miniareeq}} are satisfied. If $\E$ is strictly convex then all equilibria share the same second marginal $\nu$. If in addition, the assumptions of {\rm Corollary~\ref{purityofCNE}} are satisfied then there is at most one Cournot-Nash equilibrium.
\end{coro}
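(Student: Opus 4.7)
The plan is to deduce both statements from Proposition~\ref{equiveqmin} together with strict convexity of the full objective $\J_\mu=\W_c(\mu,\cdot)+\E$. First, since strict convexity implies convexity, Proposition~\ref{equiveqmin} applies and Cournot-Nash equilibria are exactly pairs $(\gamma,\nu)$ with $\nu$ a minimiser of $\J_\mu$ over $\D$ and $\gamma\in\Pi_o(\mu,\nu)$. So to prove that all equilibria share the same second marginal, it suffices to show that $\J_\mu$ has at most one minimiser over $\D$.

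For this I would verify that $\nu\mapsto\W_c(\mu,\nu)$ is convex on $\P(Y)$. This is a classical and immediate consequence of the linear structure of the Kantorovich problem: given $\nu_0,\nu_1\in\D$, $t\in[0,1]$, and optimal plans $\gamma_i\in\Pi_o(\mu,\nu_i)$, the convex combination $\gamma_t:=(1-t)\gamma_0+t\gamma_1$ lies in $\Pi(\mu,(1-t)\nu_0+t\nu_1)$ and has cost $(1-t)\W_c(\mu,\nu_0)+t\W_c(\mu,\nu_1)$, so this is an upper bound for $\W_c(\mu,(1-t)\nu_0+t\nu_1)$. Adding the strictly convex functional $\E$ then yields strict convexity of $\J_\mu$ on the convex set $\D$, which immediately gives uniqueness of the minimiser $\nu$. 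By Proposition~\ref{equiveqmin}, every Cournot-Nash equilibrium has this unique $\nu$ as its second marginal.

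For the second statement, assume further the hypotheses of Corollary~\ref{purityofCNE}. That corollary states that for any $\nu\in\P(Y)$ the set $\Pi_o(\mu,\nu)$ reduces to a single transport plan of the form $(\mathrm{id},T)_\#\mu$. Combining this with the uniqueness of $\nu$ just established gives uniqueness of the equilibrium $\gamma$ itself.

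I do not anticipate any real obstacle: the convexity of $\W_c(\mu,\cdot)$ is structural, and the rest is a direct concatenation of Proposition~\ref{equiveqmin} and Corollary~\ref{purityofCNE}. The only small subtlety worth flagging is that one must invoke Proposition~\ref{equiveqmin} in the \emph{minimiser $\Rightarrow$ equilibrium} direction to conclude that every minimiser produces an equilibrium, and in the \emph{equilibrium $\Rightarrow$ minimiser} direction to conclude that every equilibrium has its second marginal equal to the unique minimiser; both implications are explicitly part of that proposition's equivalence.
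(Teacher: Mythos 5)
Your proof is correct and matches the paper's intended argument: the paper gives no separate proof of this corollary, treating it exactly as you do --- strict convexity of $\E$ plus the (structural) convexity of $\nu\mapsto\W_c(\mu,\nu)$ makes $\J_\mu$ strictly convex, Proposition~\ref{equiveqmin} identifies equilibrium second marginals with the unique minimiser, and Corollary~\ref{purityofCNE} collapses $\Pi_o(\mu,\nu)$ to a single pure plan. Your flagged subtlety about using both directions of the equivalence is exactly right, and your mixing-of-plans proof of convexity of $\W_c(\mu,\cdot)$ is the standard one (the paper obtains the same convexity implicitly via the Kantorovich duality subgradient inequality in the proof of Proposition~\ref{equiveqmin}).
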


As an application, let us observe that if the assumptions of Lemma~\ref{diffkanto} are satisfied and if $\V[\nu](y)=f(y, \nu(y))$  with an $f$ which is increasing in $\nu$ and satisfies~\eqref{growth1} then there exists  a unique minimiser so the previous uniqueness result holds. This applies naturally  to the technological choice equilibrium problem as well as to the holiday choice example with pure congestion or, more generally, in the case where the congestion effects dominate as explained  below. 

In the case where 
\[\E[\nu]=\int_Y F(y, \nu(y)) \dd m_0(y)+ \frac{1}{2} \iint_{Y^2} \phi(y,z) \dd\nu(y) \dd\nu(z)\] 
the second non-local term typically favours the concentration of $\nu$ (when $\phi(y,z)$ is increasing with the distance between $y$ and $z$ for instance) and it is not convex, while the congestion terms fosters dispersion and is convex. There may however be some compensation between the two terms that makes $\E$ convex. For instance, by Cauchy-Schwarz inequality, the quadratic form
\begin{equation*}
\int_{Y} \nu^2(y)\dd m_0(y) +\iint_{Y^2} \phi(y,z)\dd\nu(y)\dd\nu(z)
\end{equation*}
is positive definite hence convex as soon as 
\[\int_{Y^2} \phi^2(y,z) \dd m_0(y) \dd m_0(z)<1.\]
Whence in this case, the uniqueness result of Corollary~\ref{corou} applies.


\subsection{The case of Inada's condition}\label{inada}

We now  consider the case where $\V$ contains a local congestion term that satisfies an Inada-like condition:
\begin{equation}\label{eq:inada}
  \lim_{\nu \to 0^+}f(\nu)=-\infty \quad\mbox{and}\quad\lim_{\nu\to +\infty} f(\nu)=+\infty.
\end{equation}
This will imply that minimisers of~\eqref{varequil} are positive $m_0$-a.e.. The optimality condition $\varphi^c +\V[\nu]=M$, for some constant $M$, will therefore be satisfied $m_0$-a.e. which implies the regularity of $\nu$. More precisely, let us consider the case where the interaction are given for $\nu\in \P(Y)\cap \L^1(m_0)$ by the map: 
\begin{equation}
  \label{eq:noform}
  \V[\nu](y)=f(\nu(y))+ \int_{Y} \phi(y,z) \dd\nu(z)
\end{equation}
(for the sake of simplicity we have dropped the dependence in $y$ of $f$) where
\begin{itemize}
\item $\phi\in \CC(Y\times Y)$ is symmetric, 
\item  $f$ : $(0, +\infty) \mapsto \R$ is continuous increasing, locally integrable on $[0, +\infty)$ and satisfies the Inada condition~\eqref{eq:inada}.
\end{itemize}
 We then define $F$ by $F(0)=0$ and $F'=f$ so that $F$ is strictly convex, continuous on $[0, +\infty)$ and $\C^1(0,+\infty)$, bounded from below and coercive {\it i.e.} $F(\nu)/\nu\to +\infty$ as $\nu\to +\infty$. As before, for any $\nu$ in  $\P(Y)\cap \L^1(m_0)$, we define the associated cost functional 
\begin{equation*}
\E[\nu]=\left\{
  \begin{array}{ll}
   \displaystyle{ \int_{Y} F(\nu)\dd m_0+ \frac{1}{2}\iint_{Y^2}\phi(y,z)\dd\nu(y)\dd\nu(z)}&\quad\mbox{if $\displaystyle\int_{Y} F(\nu)\dd m_0<+\infty$ }\vspace{.3cm}\\
+ \infty&\quad\mbox{otherwise.}
  \end{array}
\right.
\end{equation*}

The typical example we have in mind is $f(\nu)=\log(\nu)$ and $F(\nu)=\nu\log\nu-\nu$, or simply $F(\nu)=\nu\log \nu$ since we are only dealing with probability measures. In this case, the domain of $\E$ consists of absolutely continuous measures with finite entropy. Again, we look for equilibria by solving the minimisation problem~\eqref{varequil}. The implication of Inada's condition on the interiority of minimisers is given by the following:
\begin{lem}[Existence and positivity of minimisers]\label{conseqinada}
Under the above assumptions, the variational problem~\eqref{varequil} admits solutions and if $\nu$ is such a solution $\nu\geq \delta$ $m_0$-a.e. for some $\delta>0$ and $\nu\in \L^{\infty}(m_0)$.
\end{lem}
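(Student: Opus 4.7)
The plan is to address existence, positivity, and the $L^\infty$ bound in succession.

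\emph{Existence.} I would apply the direct method. Let $(\nu_n)$ be a minimising sequence for $\J_\mu$. Since $\phi$ is bounded on the compact $Y\times Y$, the estimate $\int_Y F(\nu_n)\,\dd m_0 \leq \J_\mu[\nu_n] + \tfrac12\|\phi\|_\infty \leq C$, combined with the superlinear growth $F(\nu)/\nu \to +\infty$, yields via the de la Vallée-Poussin criterion and the Dunford--Pettis theorem a subsequence $\nu_n \rightharpoonup \nu$ weakly in $L^1(m_0)$ with $\nu \in \P(Y)\cap L^1(m_0)$. The lower limit of $\J_\mu$ is then controlled by three standard facts: weak $L^1$ lower semicontinuity of $\nu\mapsto \int F(\nu)\,\dd m_0$ (convexity of $F$), weak-$*$ continuity of $\nu\mapsto \iint\phi\,\dd\nu\,\dd\nu$ (continuity of $\phi$ on compact $Y\times Y$), and weak-$*$ lower semicontinuity of $\W_c(\mu,\cdot)$. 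Hence $\nu$ is a minimiser.

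\emph{Positivity via Inada at $0$.} Fix a density $\rho$ bounded and bounded away from zero that lies in the effective domain of $\E$ (for instance $\rho\equiv m_0(Y)^{-1}$, admissible by the local integrability of $f$), and set $\nu_t:=(1-t)\nu+t\rho$, $t\in(0,1)$. Optimality gives $\J_\mu[\nu_t]\geq\J_\mu[\nu]$. By convexity of $\W_c(\mu,\cdot)$ and boundedness of the $c$-transform of its Kantorovich potential on the compact $Y$, together with an $O(1)$ bound on the difference quotient of the quadratic $\phi$-term, one deduces
\[
\int_Y \frac{F(\nu_t) - F(\nu)}{t}\,\dd m_0 \;\geq\; -C \quad \text{for all small } t>0.
\]
By convexity of $F$ with $F(0)=0$, the integrand is non-decreasing in $t$, dominated above by the integrable function $F(\rho)-F(\nu)$, and its pointwise limit as $t\to 0^+$ equals $f(\nu)(\rho-\nu)$ where $\nu>0$ and equals $-\infty$ wherever $\nu=0$ (using $\rho>0$ and $f(0^+)=-\infty$). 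Monotone convergence then forces the integral above to tend to $-\infty$ whenever $m_0(\{\nu=0\})>0$, a contradiction. Hence $\nu>0$ $m_0$-a.e.

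\emph{$L^\infty$ bound via Inada at $\infty$.} Now that $\nu>0$ $m_0$-a.e., small bounded perturbations of $\nu$ remain in the effective domain of $\E$ and can be chosen so that $f(\nu)$ stays locally bounded on their support; the variational argument of Theorem~\ref{miniareeq} can therefore be adapted to produce a constant $M$ and a Kantorovich potential $\varphi$ for $\W_c(\mu,\nu)$ such that
\[
\varphi^c(y) + f(\nu(y)) + \int_Y \phi(y,z)\,\dd\nu(z) \;=\; M \qquad m_0\text{-a.e. } y.
\]
Continuity of $c$ on the compact $X\times Y$ makes $\varphi^c$ bounded on $Y$, and the integral term is bounded by $\|\phi\|_\infty$, so $f(\nu)$ lies $m_0$-a.e.\ in a bounded interval $[a,b]$. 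Since $f$ is continuous and increasing on $(0,+\infty)$ with $f(0^+)=-\infty$ and $f(+\infty)=+\infty$, the preimage $f^{-1}([a,b])=[\delta,K]$ is a compact subinterval of $(0,+\infty)$ with $\delta>0$, hence $\delta\leq\nu\leq K$ $m_0$-a.e.

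The main obstacle is the positivity step: the difference quotient of the local term has pointwise limit $-\infty$ on $\{\nu=0\}$, so the passage to the limit in the integral must be handled by monotone (rather than dominated) convergence, and it is precisely this divergence --- driven by Inada at $0$ --- that contradicts the optimality-based lower bound. Once positivity is secured, deriving the Euler--Lagrange equation and reading off the two-sided bound on $\nu$ from Inada at $+\infty$ is essentially algebraic.
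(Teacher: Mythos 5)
Your existence step is fine and matches the paper (coercivity of $F$ plus de la Vall\'ee-Poussin/Dunford--Pettis, convexity for lower semicontinuity of the local term, duality for lower semicontinuity of $\W_c(\mu,\cdot)$). Your positivity step via monotone convergence along the linear interpolation $(1-t)\nu+t\rho$ is also correct as far as it goes --- but it only proves $\nu>0$ $m_0$-a.e., which is strictly weaker than the lemma's claim that $\nu\geq\delta$ $m_0$-a.e.\ for a \emph{uniform} $\delta>0$. The genuine gap is in your upgrade to the two-sided uniform bound: you pass through the $m_0$-a.e.\ Euler--Lagrange equation $\varphi^c+\V[\nu]=M$, but Lemma~\ref{conseqinada} is stated under only the assumptions of Section~\ref{inada} (compact metric $X$, $Y$, continuous $c$, $f$ Inada), \emph{without} the hypotheses of Lemma~\ref{diffkanto} ($X=\clos{\Omega}\subset\R^d$, $\mu$ equivalent to Lebesgue, $c$ differentiable in $x$). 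Those hypotheses are exactly what give differentiability of $\nu\mapsto\W_c(\mu,\nu)$ and uniqueness (up to constants) of the Kantorovich potential, and this is precisely why the paper derives the optimality condition~\eqref{shapeofnu} only \emph{after} the lemma, ``under the assumptions of Lemma~\ref{diffkanto}'' and using the bounds the lemma provides. Your argument reverses this logical order: to get an E--L equation from mere a.e.\ positivity you would need localized two-sided perturbations on sets $\{\delta_0\le\nu\le M\}$, the first variation of the transport term on each such perturbation, and a matching of the resulting constants across an exhausting family --- all of which you gesture at (``can therefore be adapted'') but which is unavailable in the generality in which the lemma is stated, and nontrivial even when it is available.

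The paper avoids the E--L equation entirely and obtains both bounds by a direct mass-rearrangement perturbation estimated through Kantorovich duality alone, which works for any continuous cost on compact metric spaces. Supposing $m_0(\{\nu\le\lambda\})>0$ for all $\lambda>0$, it sets $\nu_\eps:=\nu+\eps(u_{A_\delta}-u_A)$ with $A_\delta:=\{\nu\le\delta\}$, $A:=\{\delta_0\le\nu\le M\}$ and $u_B:=m_0(B)^{-1}\un_B$, and bounds $\W_c(\mu,\nu_\eps)-\W_c(\mu,\nu)\le\eps\int_Y\varphi_\eps^c\,\dd(u_{A_\delta}-u_A)\le C_1\eps$ using the \emph{suboptimal} potential $\varphi_\eps$ of the perturbed problem, which is uniformly bounded by equicontinuity of $c$-transforms --- no uniqueness or differentiability needed. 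The interaction term is $O(\eps)$, while Inada at $0$ gives $f\le -C-1$ on $(0,2\delta_1]$, so the local term drops by at least $(C+1)\eps$, contradicting optimality; the mirror argument with $C_M:=\{\nu\ge M\}$ and Inada at $+\infty$ gives $\nu\in\L^\infty(m_0)$. Your monotone-convergence positivity argument is an elegant alternative, and your overall outline does become valid under the hypotheses of Lemma~\ref{diffkanto} (which are in force in Theorem~\ref{summaryinada}); but as a proof of Lemma~\ref{conseqinada} as stated, the derivation of the uniform bounds is missing, and that is the heart of the statement.
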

The proof (see Appendix~\ref{aconseqinada}) relies on the fact that since $f(0^+)=-\infty$ the functional $\E$ abhors a vacuum. Now that we know that minimisers $\nu$ exist and are bounded from above and bounded away from $0$, under the assumptions of Lemma~\ref{diffkanto} it is easy to see, as in the previous paragraph, that they necessarily are equilibria and satisfy the optimality condition~\eqref{eq:optcond}:
\[f(\nu(y))+\int_{Y} \phi(y,z) \dd\nu(z)+\varphi^c(y) = M \]
for some constant $M$ and where as usual $\varphi$ is the Kantorovich potential between $\mu$ and $\nu$ and $\varphi^c$ is its $c$-transform.  Note that this equality is true not only $\nu$-a.e. but $m_0$-a.e., one can then invert this relation to deduce that $\nu$ coincides $m_0$ with the continuous function
\begin{equation}\label{shapeofnu}
\nu(y)=f^{-1}\left(M- \varphi^c(y) -\int_{Y} \phi(y,z) \dd\nu(z) \right).
\end{equation}
In particular there exists equilibria that have a continuous representative.\footnote{Inada's condition is actually not essential to obtain a relation of the form \eqref{shapeofnu}. Indeed, in the case of a power congestion function, $f(\nu)=\nu^{\alpha}$, $\alpha>0$, using the positive part function, one obtains a similar relation
\[\nu(y)=\left(M- \varphi^c(y) -\int_{Y} \phi(y,z) \dd\nu(z) \right)_+^{1/\alpha}.\]}
Relation~\eqref{shapeofnu} is however not very tractable in general since it involves the very indirect quantity $\varphi^c$ and an integral term. We will see in Section~\ref{hidd} how it can be simplified and reformulated as a nonlinear partial differential equation  in the case of a quadratic cost.

For the moment, the Inada condition has just enabled us to prove some further regularity properties of minimisers hence of some special equilibria. Let us summarise all this by:
\begin{thm}[Main results under the Inada condition]\label{summaryinada}
Let $\V$ be of the form~\eqref{eq:noform} where $f$ and $\phi$ satisfy the assumptions of this paragraph. If $\nu$ solves~\eqref{varequil} and $\gamma \in \Pi_o(\mu,\nu)$ then $\gamma$ is an equilibrium; in particular, there exists equilibria. Moreover any minimiser $\nu$ of~\eqref{varequil} is bounded and bounded away from $0$ and coincides $m_0$-a.e. with the continuous function given by~\eqref{shapeofnu}.

If, in addition, $\E$ is convex  $\gamma$ is an equilibrium if and only if $\gamma \in \Pi_o(\mu,\nu)$ where $\nu$ solves ~\eqref{varequil}.

If, in addition, $\E$ is strictly convex there is a uniqueness of the equilibrium second marginal $\nu$. 
\end{thm}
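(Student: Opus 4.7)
The plan is to assemble the theorem from the results already established in the section, exploiting only one genuinely new input: the stronger regularity of minimisers guaranteed by Inada's condition \eqref{eq:inada}. First I would invoke Lemma \ref{conseqinada} to obtain a minimiser $\nu$ of $\J_\mu$ over $\D$ together with the two-sided bounds $\delta \leq \nu(y) \leq \|\nu\|_\infty$ for $m_0$-almost every $y$. Because $\nu$ is bounded and bounded away from $0$, $\V[\nu]$ is bounded and the argument of Theorem \ref{miniareeq} applies: differentiating $\J_\mu$ at $\nu$ along admissible perturbations $\rho \in \D$ (using Lemma \ref{diffkanto} for the transport part and Definition \ref{vhasapot} for the interaction part) yields the first-order condition \eqref{eq:optcond}, which in turn identifies any $\gamma \in \Pi_o(\mu,\nu)$ as a Cournot-Nash equilibrium. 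Existence of equilibria then follows from the existence of a minimiser.

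Next I would upgrade \eqref{eq:optcond} to the explicit formula \eqref{shapeofnu}. Since $\nu \geq \delta > 0$ on a set of full $m_0$-measure, the $\nu$-a.e. equality in \eqref{eq:optcond} is in fact an $m_0$-a.e. equality. Writing $\V[\nu](y) = f(\nu(y)) + \int_Y \phi(y,z)\dd\nu(z)$ and invoking strict monotonicity of $f$ on $(0,+\infty)$ allows me to invert the relation and obtain \eqref{shapeofnu} $m_0$-a.e. Continuity of the resulting representative is then routine: $\varphi^c$ is continuous as the $c$-transform of the continuous function $\varphi$ on the compact set $X$, the non-local term $y \mapsto \int_Y \phi(y,z)\dd\nu(z)$ is continuous by continuity of $\phi$ on the compact set $Y \times Y$, and $f^{-1}$ is continuous on its range.

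For the convex case, the plan is to cite Proposition \ref{equiveqmin} directly: under convexity of $\E$, the first-order condition becomes sufficient, so that being a minimiser (paired with an optimal transport plan) is equivalent to being an equilibrium. Strict convexity rules out two distinct minimisers, yielding uniqueness of the second marginal $\nu$ exactly as in Corollary \ref{corou}. No additional argument is needed in this step beyond checking that the convexity hypothesis is stated on the same domain $\D$ as in those earlier statements.

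The main obstacle, and the only place Inada's condition is genuinely used, is the passage from the $\nu$-a.e. equality in \eqref{eq:optcond} to the $m_0$-a.e. equality required to invert $f$ and obtain \eqref{shapeofnu}: on $\{\nu = 0\}$ one has only an inequality, and \eqref{eq:inada} is precisely what forces this exceptional set to be $m_0$-negligible (via the positivity bound from Lemma \ref{conseqinada}). Modulo this delicate point, the theorem is a direct corollary of the variational analysis already carried out earlier in the section.
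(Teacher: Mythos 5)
Your proposal is correct and follows essentially the same route as the paper, which assembles Theorem~\ref{summaryinada} from exactly the ingredients you cite: Lemma~\ref{conseqinada} for existence and the two-sided bounds, the differentiation argument of Theorem~\ref{miniareeq} (legitimate once $\nu$ is bounded and bounded away from zero), and Proposition~\ref{equiveqmin} together with Corollary~\ref{corou} for the (strictly) convex case. Your identification of the key point --- that the positivity bound from Lemma~\ref{conseqinada} upgrades the $\nu$-a.e.\ equality in~\eqref{eq:optcond} to an $m_0$-a.e.\ one, which is what permits inverting $f$ to obtain the continuous representative~\eqref{shapeofnu} --- is precisely the paper's own observation.
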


\section{Hidden convexity and further uniqueness results}\label{hidd}

So far, our variational approach has enabled us to prove the existence of equilibria by the minimisation problem~\eqref{varequil}. However, the previous results are not totally satisfying since in general there might exist equilibria that are not minimisers and even if we are only interested in the special equilibria obtained by minimisation,  optimality conditions like~\eqref{shapeofnu} are not tractable enough to provide a full characterisation. Under further convexity conditions that are quite stringent we have seen that equilibria necessarily are minimisers and obtained uniqueness of both. In the case where
\[\E[\nu]=\int_Y F(y, \nu(y)) \dd m_0(y)+ \frac{1}{2} \iint_{Y\times Y} \phi(y,z) \dd\nu(y) \dd\nu(z)\] 
there is a competition between the convexity of the congestion term that favours dispersion and  the non-convexity of the interaction term so that in general nothing can be said about the convexity of $\E$ in the usual sense. We shall see however, that some  convexity structure, more adapted to optimal transport, can be used to derive new uniqueness and characterisation results.  The aim of  this section is precisely to exploit some hidden convexity structure in one dimension and in higher dimensions when the cost is quadratic. This goal can be achieved thanks to the very powerful notion of displacement convexity (or some slight variant of it) due to~\cite{McC2}. In recent years, these notions of convexity, intimately linked to optimal transport, have proved to be an extremely useful and flexible tool in particular in the study of nonlinear diffusions, to our knowledge, this is the first time they are used in an economic context, see also~\cite{BMS}. We refer to Appendix~\ref{tool} for a very short presentation and Section \ref{hidddimone} for a detailed exposition in the easier one-dimensional case. Much more on this rich subject can be found in the books~\cite{AGS,villani,villani2}.

 \subsection{Hidden convexity in dimension one}\label{hidddimone}

 Let us start with the simple one-dimensional case where the intuition is easy to understand: the functional $\J_\mu$ is not convex with respect to $\nu$ but it is with respect to $T$, the optimal transport map from $\mu$ to $\nu$.  Let us take $X=Y=[0,1]$, $m_0$ is the Lebesgue measure on $[0,1]$, $\mu$ is absolutely continuous with respect to the Lebesgue measure, and assume that $\V[\nu]$ takes the form:
 \[\V[\nu](y)=f(\nu(y))+v(y)+ \int_{[0,1]} \phi(y,z) \dd\nu(z)\]
 and that
 \begin{itemize}
 \item the transport cost $c$ is of the form $c(x,y)=C(x-y)$ where $C$ is strictly convex and differentiable,
 \item $f$ is  increasing,
 \item $v$ is convex on $[0,1]$ and $\phi$ is convex, symmetric, differentiable and has a locally Lipschitz gradient. 
 \end{itemize}
 As already noted the corresponding cost 
 \[\E[\nu]:=\int_{0}^1 F(\nu(y)) \dd y+\frac{1}{2} \iint_{[0,1]^2} \phi(y,z) \dd \nu(y)\dd \nu(z)+ \int_{0}^1 v(y) \dd \nu(y)\]
 (with $F'=f$) is not convex in the usual sense in general and neither is the functional $\J_\mu=\W_c(\mu,.)+\E$.  
 
However, we shall see that $\J_\mu$ has  good convexity properties when one considers the following interpolation. Let $(\rho, \nu)\in \P([0,1])^2$ then there is a unique optimal transport map $T_0$ (respectively $T_1$) from $\mu$ to $\nu$ (respectively from $\mu$ to $\rho$) for the cost $c$ and it is non-decreasing (see \cite{villani}). For $t\in [0,1]$, let us define:
 \[\nu_t:={T_t}_{\#} \mu \mbox{ where }  T_t:=((1-t)T_0+t T_1)\]
 then by construction, the curve $t\mapsto \nu_t$ connects $\nu_0=\nu$ to $\nu_1=\rho$. 
 
 \begin{defi}
 A functional $\J$ : $\P(Y)\to \R\cup\{+\infty\}$ is called displacement convex whenever $t\in [0,1]\mapsto \J[\nu_t]$ is convex (for every choice of endpoints $\nu$ and $\rho$), it is called strictly displacement convex when, in addition $\J[\nu_t]<(1-t)\J[\nu]+t\J[\rho]$ when $t\in(0,1)$ and $\rho\neq \mu$. 
\end{defi}

We claim that $\J_\mu$ is strictly displacement convex; indeed, take $(\nu, \rho)$ two probability measures in the domain of $\E$ (which is convex by convexity of $F$), define $\nu_t$ as above and, let us consider the four terms in $\J_\mu$ separately:

 \begin{itemize}

 \item By definition of $\W_c$, $\nu_t$ and the strict convexity of $C$ we have  
 \[\begin{split}
 \W_c(\mu, \nu_t) &\leq \int_0^1 C(x-((1-t)T_0(x)+tT_1(x))) \dd\mu(x)\\
 &\leq (1-t) \int_0^1 C(x-T_0(x)) \dd\mu(x) +t \int_0^1 C(x-T_1(x)) \dd\mu(x)\\
 &=(1-t)\W_c(\mu, \nu)+t \W_c(\mu, \rho)
 \end{split}\]
 with a strict inequality if $t\in(0,1)$ and $\nu\neq \rho$,

 \item By construction 
 \[\int_0^1 v\dd \nu_t= \int_0^1 v(T_t(x)) \dd\mu(x) 
 \]
 which is convex with respect to $t$, by convexity of $v$, 

 \item Similarly
 \[\iint_{[0,1]^2} \phi(y,z) \dd \nu_t(y) \dd\nu_t(z)=\iint_{[0,1]^2} \phi(T_t(x), T_t(y)) \dd \mu(x) \dd\mu(y)\]
 is convex with respect to $t$, by convexity of  $\phi$,

 \item The convexity of the remaining congestion term is more involved.  Since $\nu_t={T_t}_\#\mu$ and $T_t$ is non-decreasing, at least formally\footnote{see~\cite{AGS,villani,villani2} for a rigorous justification.} we have $\nu_t(T_t(x))T'_t(x)=\mu(x)$, by the change of variables formula we also have
 \[\int_0^1 F(\nu_t(y)) \dd y=\int_0^1 F(\nu_t(T_t(x)))T'_t(x) \dd x= \int_0^1 F\Big(\frac{\mu(x)}{T'_t(x)}\Big)T'_t(x) \dd x\]
 and we conclude by observing that $\alpha \mapsto F(\mu(x) \alpha^{-1}) \alpha$ is convex and that $T'_t(x)$ is linear in $t$. 

 \end{itemize}

 Under the assumptions above, $\J_\mu$ is therefore strictly displacement convex and thus admits at most one minimiser (indeed if $\nu$ and $\rho$ were different minimisers, by strict displacement convexity, one would have $\J_\mu[\nu_{1/2}]<\frac12 \J_\mu[\nu]+\frac12 \J_\mu[\rho]$).   Actually more is true (see Appendix \ref{proofgeodcon} for details): if $\gamma$ is an equilibrium then its second marginal solves~\eqref{varequil} and therefore is unique. Since $c$ satisfies the generalised Spence-Mirrlees condition (see Corollary \ref{purityofCNE}), we deduce the following uniqueness result

 \begin{thm}[Uniqueness of an equilibrium by displacement convexity in dimension one] Under the assumptions above, we have the equivalence 
\begin{equation*}
  \mbox{$\nu$ is a minimiser to~\eqref{varequil} and $\gamma \in \Pi_o(\mu,\nu)$ }\quad\Leftrightarrow\quad \mbox{$\gamma$ is an equilibrium}
\end{equation*}
and since $\J_\mu$ is strictly displacement convex,  there is  uniqueness of the equilibrium (which is actually necessarily pure). 
 \end{thm}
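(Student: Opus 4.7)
The forward implication is an immediate application of Theorem~\ref{miniareeq}, whose hypotheses hold in the present setting, so the real content lies in the reverse implication. Starting from a Cournot--Nash equilibrium $\gamma$ with second marginal $\nu$, the plan is to show that $\nu$ minimises $\J_\mu$ by exploiting the displacement convexity established above. First, since $c(x,y)=C(x-y)$ with $C$ strictly convex satisfies the generalised Spence--Mirrlees condition, Corollary~\ref{purityofCNE} forces $\gamma=(\id,T_0)_\#\mu$, where $T_0$ is the unique non-decreasing optimal transport from $\mu$ to $\nu$. Fix an arbitrary competitor $\rho\in\dom(\E)$, let $T_1$ be the optimal map from $\mu$ to $\rho$, and form the displacement geodesic $\nu_t:=(T_t)_\#\mu$ with $T_t:=(1-t)T_0+tT_1$. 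Introduce
\[\Psi(t):=\int_0^1 C(x-T_t(x))\dd\mu(x)+\E[\nu_t],\]
which majorises $\J_\mu[\nu_t]$ with equality at $t=0$.

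The argument rests on two facts. First, $\Psi$ is convex on $[0,1]$: this is exactly what the four bullet points above establish, applied component by component, and it is strict as soon as $\nu\neq\rho$ thanks to the strict convexity of $C$. Second, I claim $\Psi'(0^+)\ge 0$. Differentiating each piece of $\Psi$ at $t=0$ (using an Otto-type calculation along the displacement curve for the $\E$-part) yields
\[\Psi'(0^+)=\int_0^1 \bigl(\partial_y c(x,T_0(x))+\partial_y\V[\nu](T_0(x))\bigr)\bigl(T_1(x)-T_0(x)\bigr)\dd\mu(x).\]
The equilibrium condition states that $T_0(x)$ minimises $y\mapsto c(x,y)+\V[\nu](y)$ for $\mu$-a.e.\ $x$: at interior values $T_0(x)\in(0,1)$ the pointwise first-order equality $\partial_y c+\partial_y\V[\nu]=0$ makes the integrand vanish, while at boundary values $T_0(x)\in\{0,1\}$ the signed one-sided minimality conditions combine with the correspondingly signed factor $T_1(x)-T_0(x)$ to produce a nonnegative integrand. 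Combining convexity of $\Psi$ with $\Psi'(0^+)\ge 0$ gives $\J_\mu[\rho]=\Psi(1)\ge \Psi(0)+\Psi'(0^+)\ge \J_\mu[\nu]$, so $\nu$ minimises $\J_\mu$ since $\rho$ was arbitrary.

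Uniqueness of the equilibrium second marginal then follows immediately from the strict displacement convexity of $\J_\mu$: two distinct minimisers $\nu\neq\rho$ would give $\J_\mu[\nu_{1/2}]<\tfrac12\bigl(\J_\mu[\nu]+\J_\mu[\rho]\bigr)$, contradicting minimality. Purity of the equilibrium has already been observed via Corollary~\ref{purityofCNE}, and full uniqueness of $\gamma$ itself follows because $T_0$ is uniquely determined by $\mu$ and $\nu$.

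The step I expect to be the main obstacle is the rigorous justification of the derivative formula for $\Psi'(0^+)$, in particular for the local congestion term $\int F(\nu_t)\dd m_0$: this requires the change-of-variables identity $\nu_t(T_t(x))T_t'(x)=\mu(x)$, the a.e.\ differentiability of $T_0$ and $T_1$ coming from their monotonicity, a dominated-convergence step to swap $\frac{d}{dt}$ with the integral, and careful handling of the one-sided first-order conditions at boundary values $T_0(x)\in\{0,1\}$. These are precisely the technicalities that the authors defer to the appendix.
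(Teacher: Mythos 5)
Your forward implication, the purity claim, and the final uniqueness-from-strict-displacement-convexity step are all fine and match the paper. The genuine gap is in the reverse implication, at exactly the step you flag as a deferred technicality: the derivative formula
\[\Psi'(0^+)=\int_0^1 \bigl(\partial_y c(x,T_0(x))+\partial_y\V[\nu](T_0(x))\bigr)\bigl(T_1(x)-T_0(x)\bigr)\dd\mu(x)\]
is not available at this level of generality, and it is not something the paper works out in the appendix --- the paper's proof is organised precisely so as never to need it. The term $\partial_y\V[\nu]$ contains $\partial_y\bigl[f(\nu(\cdot))\bigr]$, and at the point where you invoke it $\nu$ is only known to be an $\L^1(m_0)$ density: its continuity is a \emph{conclusion} of the theorem, extracted from the optimality condition \eqref{shapeofnu} after minimality is established, not a hypothesis. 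Likewise the pointwise stationarity $\partial_y c+\partial_y\V[\nu]=0$ at interior values of $T_0(x)$ presupposes differentiability of $\V[\nu]$ that the equilibrium condition \eqref{equipp} does not provide --- it only gives $\varphi^c+\V[\nu]\ge M$ $m_0$-a.e.\ with equality $\nu$-a.e. Even formally, the displacement derivative of the congestion term is $\frac{d}{dt}\int_0^1 F\bigl(\mu(x)/T_t'(x)\bigr)T_t'(x)\dd x$, which involves $T_1'-T_0'$; recasting it in your form requires an integration by parts whose boundary terms again demand regularity you do not have. As written the argument is therefore circular: the smoothness needed to differentiate $\Psi$ is exactly what one deduces \emph{after} knowing $\nu$ minimises.

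The paper's proof (Appendix~\ref{proofgeodcon}, to which the one-dimensional statement refers) takes a softer route that you should adopt. Displacement convexity is used only through the elementary monotonicity of difference quotients, $\J_\mu[\rho]-\J_\mu[\nu]\ge \frac{1}{t}\bigl(\J_\mu[\nu_t]-\J_\mu[\nu]\bigr)$, and the right-hand side is then bounded from below using only \emph{flat} (linear-interpolation) tools: Kantorovich duality \eqref{dualkanto} gives $\W_c(\mu,\nu_t)-\W_c(\mu,\nu)\ge\int\varphi^c\dd(\nu_t-\nu)$; the ordinary convexity of $F$ gives $\int F(\nu_t)-\int F(\nu)\ge\int f(\nu)(\nu_t-\nu)\dd m_0$; and the quadratic interaction term is expanded with a remainder $R_t$ controlled via $\W_1(\nu_t,\nu)\le t\diam(Y)$ and a Lipschitz bound on $y\mapsto\int\phi(y,z)\dd(\nu_t-\nu)(z)$, so that $R_t=O(t)$. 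Writing $\nu_t=\nu+th_t$, this yields $\J_\mu[\rho]-\J_\mu[\nu]\ge\int(\varphi^c+\V[\nu])\dd h_t+R_t\ge M\int\dd h_t+R_t=R_t\to 0$ as $t\to0^+$, where only the integrated equilibrium condition ($\ge M$ $m_0$-a.e., $=M$ $\nu$-a.e., together with $\nu_t\ll m_0$) is used --- no derivative of $\V[\nu]$, no a.e.\ differentiation of the transport maps, no boundary case analysis. Replacing your differentiation of $\Psi$ by this difference-quotient argument closes the gap; the rest of your proof can stand as is.
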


\subsection{Hidden convexity under quadratic cost}\label{eqeqv}
The arguments of the previous paragraph can be generalised in higher dimensions when the transport cost is quadratic.
Throughout this section, we will assume the following:
\begin{itemize}
\item $X=Y=\clos{\Omega}$ where $\Omega$ is some open bounded convex subset of $\R^d$,
\item $\mu$ is absolutely continuous with respect to the Lebesgue measure (that will be the reference measure $m_0$ from now on) and has a positive density on $\Omega$,
\item  $c$ is quadratic {\it i.e.}
\[c(x,y):=\frac{1}{2} \vert x-y\vert^2, \; (x,y)\in \R^d\times \R^d,\]
\item $\V$ again takes the form
\[\V[\nu](y)=f(\nu(y))+v(y)+\int_{Y} \phi(y,z) \dd\nu(z)\]
where $v$ is convex, $f$ satisfies the assumptions of Section \ref{inada} and $\phi\in \CC(\R^{d}\times \R^d)$ is symmetric and $\C^{1,1}_\loc$ ({\it i.e.} $\C^1$ with a locally Lipschitz gradient).
\end{itemize}
Again denoting by $F$ the primitive of $f$ that vanishes at $0$, the corresponding energy reads
\[\E[\nu]=\int_Y F(\nu(y)) \dd y+\int_Y v(y) \dd \nu(y)+ \frac{1}{2} \iint_{Y^{2}} \phi(y,z) \dd \nu(z)\dd \nu(y).\]

Note that as $c$ is quadratic, Brenier's Theorem (see Theorem~\ref{brenierthm}) implies the uniqueness and the purity of optimal plans $\gamma$ between $\mu$ and an arbitrary $\nu$. The variational problem~\eqref{varequil} then takes the form
\begin{equation}\label{varequil2}
\inf_{\nu\in \P(\clos{\Omega})} \J_\mu[\nu] \quad\mbox{where}\quad \J_\mu[\nu]:=\frac{1}{2} \W_2^2(\mu, \nu)+\E[\nu]
\end{equation} 
with $\W_2^2(\mu, \nu)$ is the squared-2-Wasserstein distance between $\mu$ and $\nu$ i.e.:
\[\W_2^2(\mu, \nu):=\inf_{\gamma \in \Pi(\mu, \nu)} \int_{X^2} \vert x-y \vert^2 \dd\gamma(x,y).\]

Two more structural assumptions are needed to guarantee the strict convexity of $\J_\mu$ along generalised geodesics with base $\mu$ (see Appendix \ref{cageod} for details), namely McCann's condition:
\begin{equation}\label{geodcon1}
 \nu\mapsto \nu^d F(\nu^{-d}) \mbox{ is convex non-increasing on $(0,+\infty)$}
\end{equation}
and that $\phi$ is convex. Note that McCann's condition is satisfied by power functions with an exponent larger than $1$ as well as by the entropy $F(\nu)=\nu\log(\nu)$. 

\smallskip

In contrast with Theorem~\ref{summaryinada} where minimisers are equilibria but the reverse is not always true, convexity along generalised geodesics ensures the converse reciprocal property:
\begin{thm}[Equilibria and minimisers coincide, uniqueness and regularity under generalized convexity]\label{uniqgeodcon}
Under the assumptions above, we have the equivalence 
\begin{equation*}
  \mbox{$\nu$ is a minimiser to~\eqref{varequil2} and $\gamma \in \Pi_o(\mu,\nu)$ }\quad\Leftrightarrow\quad \mbox{$\gamma$ is an equilibrium}
\end{equation*}
Moreover, there exists a unique equilibrium (which is actually pure) and the second marginal $\nu$ of this equilibrium has a continuous density.
\end{thm}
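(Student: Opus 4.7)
The plan is to combine Theorem~\ref{miniareeq} (every minimiser of $\J_\mu$ is an equilibrium) with strict convexity of $\J_\mu$ along \emph{generalised geodesics with base} $\mu$, a notion tailor-made for the quadratic cost thanks to Brenier's theorem. For $\nu_0,\nu_1 \in \D$, I let $T_i = \nabla u_i$ be the Brenier map pushing $\mu$ onto $\nu_i$ and set $\nu_t := ((1-t) T_0 + t T_1)_{\#}\mu$. Existence of a minimiser $\bar\nu$ for~\eqref{varequil2} will follow from the direct method: $\J_\mu$ is weakly-$*$ lower semicontinuous on $\P(\clos{\Omega})$ (the $\W_2^2$ term is weakly continuous in the compact setting, the quadratic $\phi$-term is weakly continuous, and $\nu\mapsto\int F(\nu)\,\dd m_0$ is weakly lower semicontinuous by Jensen), and the superlinear growth of $F$ built into the Inada assumption provides coercivity.

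Strict convexity of $\J_\mu$ along $t \mapsto \nu_t$ is then checked termwise. The transport term satisfies $\tfrac12\W_2^2(\mu,\nu_t) \leq \tfrac12 \int |T_t - \id|^2\,\dd\mu$, which is strictly convex in $t$ unless $\nu_0=\nu_1$. The two ``explicit'' terms $\int v\,\dd\nu_t = \int v\circ T_t\,\dd\mu$ and $\iint \phi(T_t(x),T_t(x'))\,\dd\mu(x)\,\dd\mu(x')$ are convex in $t$ because $v$ and $\phi$ are convex and $T_t$ is linear. McCann's condition~\eqref{geodcon1} is precisely what makes the internal energy $\int F(\nu)\,\dd m_0$ convex along generalised geodesics. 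Adding up, $\J_\mu$ is strictly convex along generalised geodesics with base $\mu$, so the minimiser $\bar\nu$ is unique, and Theorem~\ref{miniareeq} provides the equilibrium $\bar\gamma := (\id, \nabla\bar u)_{\#}\mu$.

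For the nontrivial converse, let $\gamma$ be any equilibrium with second marginal $\nu^*$. Brenier's theorem forces $\gamma = (\id, T_0^*)_{\#}\mu$ with $T_0^* = \nabla u^*$. For any competitor $\rho \in \D$ with Brenier map $T_1$, form the generalised geodesic $\nu_t$ from $\nu^*$ to $\rho$ with base $\mu$. Convexity of $\J_\mu$ along $\nu_t$ yields
\[
\J_\mu[\rho] \;\geq\; \J_\mu[\nu^*] \;+\; \frac{\dd}{\dd t}\bigg|_{t=0^+} \J_\mu[\nu_t],
\]
and a termwise computation identifies the right-hand derivative with $\int \big(T_0^* - \id + \nabla \V[\nu^*]\circ T_0^*\big)\cdot (T_1 - T_0^*)\,\dd\mu$. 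The equilibrium condition forces $T_0^*(x)$ to minimise $y \mapsto c(x,y) + \V[\nu^*](y)$, so the first-order condition $x = T_0^*(x) + \nabla \V[\nu^*](T_0^*(x))$ makes the integrand vanish, giving $\J_\mu[\nu^*] \leq \J_\mu[\rho]$. Choosing $\rho=\bar\nu$ and using uniqueness of the minimiser yields $\nu^* = \bar\nu$, and then $\gamma = \bar\gamma$. Continuity of the density will then follow from the representation~\eqref{shapeofnu}: $\varphi^c$ is Lipschitz as the $c$-transform of a Lipschitz Kantorovich potential on the compact $\clos{\Omega}$, the convolution $\int \phi(\cdot,z)\,\dd\nu(z)$ is continuous because $\phi \in \C^{1,1}_\loc$, and $f^{-1}$ is continuous.

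The hard part will be the rigorous derivation of the interior Euler equation on $T_0^*$: the equilibrium inequality $c(x,y) + \V[\nu^*](y) \geq \varphi(x)$ only holds $m_0$-a.e.\ in $y$, and $\V[\nu^*]$ is a priori merely measurable, so one cannot na\"\i vely differentiate in $y$. The remedy is to upgrade any equilibrium $\nu^*$ to the regularity of Lemma~\ref{conseqinada}, namely $\delta \leq \nu^* \leq \Lambda$ $m_0$-a.e.\ with a continuous representative: the Inada blow-up of $f$ rules out vanishing and the coercivity of $F$ rules out concentration, exactly as for minimisers. Once this a priori regularity is established, $\V[\nu^*]$ is a continuous function on $\clos{\Omega}$, the pointwise equilibrium relation holds everywhere, and the derivative computation above is fully justified.
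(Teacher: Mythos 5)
Your skeleton is the same as the paper's: existence by the direct method, uniqueness from strict convexity of $\J_\mu$ along generalised geodesics with base $\mu$ (strictly convex transport term, convex $v$- and $\phi$-terms, internal energy handled by McCann's condition), and the converse by showing that the second marginal $\nu^*$ of any equilibrium minimises $\J_\mu$ after an Inada-based regularity upgrade. Where you genuinely diverge is the execution of the converse. You compute the one-sided derivative $\tfrac{\dd}{\dd t}\big|_{t=0^+}\J_\mu[\nu_t]$ exactly and kill it with the pointwise Euler--Lagrange condition $x=T_0^*(x)+\nabla\V[\nu^*](T_0^*(x))$. The paper never differentiates along the geodesic: writing $h_t=(\nu_t-\nu^*)/t$, it bounds the difference quotient from below using the \emph{linear} convexity of each piece --- Kantorovich duality gives $\tfrac1t\bigl[\tfrac12\W_2^2(\mu,\nu_t)-\tfrac12\W_2^2(\mu,\nu^*)\bigr]\ge\int_Y\varphi^c h_t\dd m_0$, convexity of $F$ gives $\tfrac1t\int_Y\bigl[F(\nu_t)-F(\nu^*)\bigr]\dd m_0\ge\int_Y f(\nu^*)h_t\dd m_0$ --- and only the bilinear interaction term is expanded, with an explicit $O(t)$ remainder controlled via $\W_1(\nu_t,\nu^*)\le t\diam(Y)$; it then concludes from $\varphi^c+\V[\nu^*]=M$ $m_0$-a.e.\ and $\int_Y h_t\dd m_0=0$. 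This device buys exactly what your ``hard part'' still lacks: the derivative of the internal energy $\int_Y F(\nu_t)\dd m_0$ along a generalised geodesic involves differentiating the pushforward density (Monge--Amp\`ere/Aleksandrov machinery), and you assert the ``termwise computation'' rather than prove it. The short repair, in the spirit of the paper, is to use $F(\nu_t)\ge F(\nu^*)+f(\nu^*)(\nu_t-\nu^*)$ together with the identity $f(\nu^*)=M-v-\varphi^c-\int_Y\phi(\cdot,z)\dd\nu^*(z)$ $m_0$-a.e., which shows $f(\nu^*)$ has a Lipschitz representative, so that $\tfrac1t\int_Y f(\nu^*)\dd(\nu_t-\nu^*)=\tfrac1t\int_X\bigl[f(\nu^*)(T_t)-f(\nu^*)(T_0^*)\bigr]\dd\mu$ passes to the limit; a lower bound on the derivative is all you need.

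One further correction: your regularity upgrade is misattributed. Lemma~\ref{conseqinada} is proved by perturbing a \emph{minimiser}, and that argument has no analogue for an arbitrary equilibrium --- there is nothing being minimised. The correct (and easier) source of the upgrade, which is what the paper actually uses, is the equilibrium relation itself: $f(\nu^*)+v+\int_Y\phi(\cdot,z)\dd\nu^*(z)+\varphi^c\ge M$ $m_0$-a.e.\ with equality $\nu^*$-a.e.; since $v$, the interaction term and $\varphi^c$ are bounded on the compact set $\clos{\Omega}$, the Inada blow-up at $0$ forces $\nu^*\ge\delta>0$ $m_0$-a.e.\ (so the equality holds Lebesgue-a.e.), and the $\nu^*$-a.e.\ equality plus the Inada condition at $+\infty$ gives the upper bound and, via~\eqref{shapeofnu}, continuity --- coercivity of $F$ plays no role here. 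With this substitution the rest of your argument is legitimate: $\V[\nu^*]$ is then Lipschitz, hence differentiable Lebesgue-a.e., and since $\nu^*\ll m_0$ with $\nu^*(\partial\Omega)=0$, for $\mu$-a.e.\ $x$ the point $T_0^*(x)$ is an interior minimiser at which $\V[\nu^*]$ is differentiable, so your first-order condition holds. Modulo these two repairs your proof is correct and is a slightly more ``Euler--Lagrange-flavoured'' rendering of the same mechanism as the paper's.
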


\subsection{A partial differential equation for the equilibrium}\label{pdeeq}

In the quadratic cost framework of Paragraph \ref{eqeqv}, our aim now is to write the optimality condition~\eqref{shapeofnu} in the form of a nonlinear and non-local equation partial differential equation of Monge-Amp\`ere type. For computational simplicity, we take $v=0$ and  $f(\nu)=\log(\nu)$ but any convex,  $\C^{1,1}_{\loc}$, symmetric $v$ and any increasing $f$ satisfying  McCann's condition would lead to a similar partial differential equation.  Let us recall that the unique minimiser/equilibrium $\nu$ satisfies~\eqref{shapeofnu} and is actually characterised by this condition. In this equation, the less explicit term is $\varphi^c$. Thanks to Brenier's theorem (see Appendix \ref{quadma}), this term can be made more explicit, as follows: the Brenier map $T$ between  $\mu$ and $\nu$ is the gradient of some convex function, $T=\nabla u$ with $u$ convex, and similarly the Brenier map between $\nu$ and $\mu$ is $\nabla u^*$ where $u^*$ is  the Legendre transform of $u$. In the case of a quadratic cost, $u$ and $u^*$ are related to the Kantorovich potential  $\varphi$ and its $c$-transform $\varphi^c$ through
\begin{equation}
  \label{eq:phiu}
  \varphi(x)=\frac{1}{2} \vert x \vert^2-u(x), \quad \varphi^c(y)=\frac{1}{2} \vert y \vert^2-u^*(y),\quad \forall(x,y)\in \Omega\times \Omega\;.
\end{equation}

When $\nabla u$ is smooth enough, the Monge-Amp\`ere equation, see~\eqref{eq:mongeampere}, reads:
\begin{equation*}
\mu(x)=\det(D^2u(x))\,\nu(\nabla u(x)), \quad \forall x\in \Omega
\end{equation*}
which has to be supplemented with the natural sort of boundary condition 
\begin{equation}\label{bcmongeamp}
\nabla u(\Omega)=\Omega. 
\end{equation}

We may then rewrite the optimality condition~\eqref{shapeofnu} as 
\begin{equation}\label{shapeexp}
\nu(y)=C\exp \left(- \frac{1}{2} \vert y\vert^2+u^*(y)  -\int_{Y} \phi(y,z) \dd\nu(z) \right)
\end{equation}
where $C$  is a normalisation constant that makes the total mass of the right hand side be $1$ on $\Omega$. Since $u$ is defined up to an additive constant, one may actually choose $C=1$. As $\nabla u_\# \mu=\nu$,  we first have
\[\int_{Y} \phi(\nabla u(x),z) \dd\nu(z)=\int_{\Omega} \phi(\nabla u(x), \nabla u(z)) \dd \mu (z)\;.\]
On the other hand, using the well-known convex analysis identity
\[u^*(\nabla u(x))=x \cdot \nabla u(x)-u(x)\;\]
and performing the change of variable $y=\nabla u(x)$ in~\eqref{shapeexp}, the Monge-Amp\`ere equation then becomes
\begin{multline}\label{mongeampeq}
\mu(x)=\det(D^2u(x)) \exp\left(-\frac{1}{2} \vert \nabla u(x)\vert^2 +x\cdot \nabla u(x)-u(x)\right) \times  \\
\exp\left(-\int_{\Omega} \phi(\nabla u(x), \nabla u(z))\dd \mu(z)\right) \;.
\end{multline}
The equilibrium problem is therefore equivalent to a non-local and nonlinear partial differential equation.\smallskip

This kind of partial differential equation is rather complicated. However, in dimension $1$, {\it i.e.} when $\Omega$ is an open interval, which we can assume to be $(0,1)$, the boundary condition~\eqref{bcmongeamp} is $u'(0)=0$, $u'(1)=1$ and the Monge-Amp\`ere equation~\eqref{mongeampeq} simplifies to
\[\mu(x)=u''(x) \exp\left(-\frac{1}{2}  u'(x)^2 +x\cdot u'(x)-u(x)-\int_{(0,1)} \phi(u'(x), u'(z))\dd \mu(z)\right).\]

Note that this differential equation automatically implies the strict convexity of $u$. We do not know whether this equation can be solved numerically in an iterative way but will see how the equilibrium can be computed in one dimension thanks to a convenient reformulation of~\eqref{varequil2} as explained below.

\subsection{Numerical computations in dimension one}\label{num1}
Let $\Omega=(0,1)$, $m_0$ be the Lebesgue measure on $(0,1)$, $\mu$ be absolutely continuous with respect to $m_0$ with a positive density still denoted $\mu$. Consider again\footnote{Again the choice of a logarithmic congestion function is not so essential, power congestion functions can be considered as well.} the variational problem
\begin{equation}\label{varpbm1d}
\inf_{\nu} \J_\mu[\nu]
\end{equation}
\begin{equation*}
  \mbox{where}\quad \J_\mu[\nu]:=\frac12 \W_2^2(\mu,\nu)+\int_0^1 \nu \log\nu + \frac{1}{2} \iint_{[0,1]^2} \phi(y,z) \dd\nu(y)\dd\nu(z)
\end{equation*}
where $\phi$ is $\C^{1,1}_{\loc}$,  convex and symmetric. Looking for $\nu$ amounts to look for its rearrangement or quantile function:
\[G(x):=\inf\{\lambda \; : \; \nu([0,\lambda])\ge x\}, \; \forall x\in (0,1).\]
Note that $G$ is non-decreasing and $G_\#m_0=\nu$. We also denote by $H$ the quantile of $\mu$. It is then well known (see \cite[Section~2.2]{villani} or~\cite[Theorem~6.0.2]{AGS}) that
\[\W_2^2(\mu,\nu)=\int_0^1 \left|G(x)-H(x)\right|^2 \dd x.\]
Moreover since  $G_\#m_0=\nu$, we have
\[ \iint_{[0,1]^2} \phi(y,z) \dd \nu(y)\dd \nu(z)= \iint_{[0,1]^2} \phi(G(x),G(\theta)) \dd x\dd \theta\;.\]
And since $\nu$ is regular the change of variable formula yields 
\[\int_0^1 \nu(x) \log(\nu(x)) \dd x =\int_0^1 \nu(G(x)) \, \log(\nu(G(x))) \,G'(x) \dd x=-\int_0^1 \log(G'(x))\dd x\,,\]
as, by definition of $G$, $\nu(G(x))\,G'(x)=1$ a.e.

Therefore reformulating~\eqref{varpbm1d} in terms of quantile consists in minimising the strictly convex functional
\[\frac12 \int_0^1 |G-H|^2  -\int_0^1 \log(G'(x))\dd x+ \frac{1}{2} \iint_{[0,1]^2} \phi(G(x),G(\theta)) \dd x\dd \theta  \]
subject to the boundary conditions $G(0)=0$, $G(1)=1$. The discretisation of this variational problem is easy to solve using standard gradient descent methods, see Figure~\ref{fig:eq}\footnote{Actually, in our simulations, we have relaxed the condition that the support of $\nu$ is $[0,1]$ {\it i.e.} the boundary conditions $G(0)=0$, $G(1)=1$.}. 

\begin{figure}[h!]
 \begin{minipage}[t]{1\linewidth}
\centering\epsfig{figure=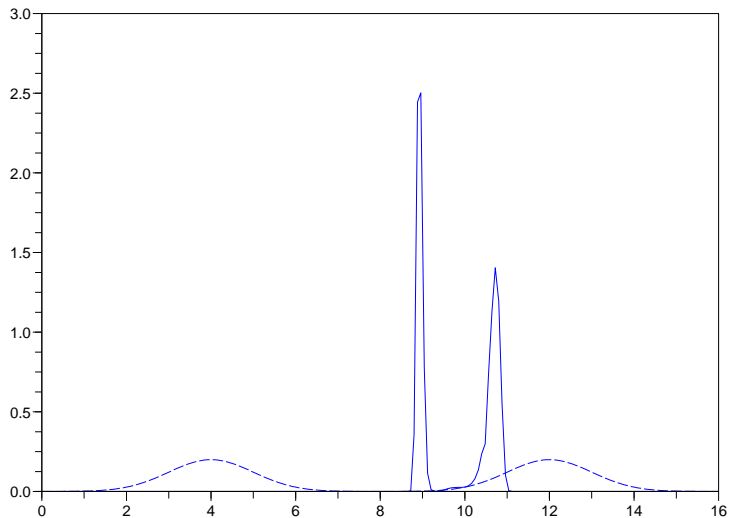,width=12cm}
 \end{minipage} 
\caption{\small The distribution $\mu$ of the agents is dash line and the solution $\nu$ to~\eqref{varpbm1d} in the case $f(x)=x^8$, $\phi(z)=10^{-4}|z|^2$ and $v=(x-10)^4$.}\label{fig:eq}
\end{figure}

\section{Concluding remarks}\label{concl}
We conclude the paper, by two remarks, for the sake of simplicity, we adopt exactly the same framework and notations, as in Section~\ref{eqeqv}. 

\subsection{Implementation by taxes}
By Theorem~\ref{uniqgeodcon} the unique equilibrium is the unique minimiser of the functional $\J_\mu$. It would therefore be tempting to interpret this result as a kind of welfare theorem. A simple comparison between $\J_\mu$ and the total social cost tells us however that the equilibrium is not efficient. Indeed,  the total social cost ${\rm SC}[\nu]$ is the sum of the transport cost $\W_2^2(\mu, \nu)/2$ and the additional cost $\int_Y \V[\nu](y) \,\nu(y)\dd y$ {\it i.e.} 
\[{\rm SC}[\nu]=\frac{1}{2}\W_2^2(\mu, \nu)+\int_Y f(\nu(y)) \dd\nu(y) + \int_Y v \dd\nu+ \iint_{Y^2} \phi(y,z) \dd\nu(y)\dd\nu(z)\;.\]
The second term represents the total congestion cost and the last one the total interaction cost. The functional $\J_\mu$ whose minimiser is the equilibrium has a  similar form, except that in its second term $f(\nu)\nu$ is replaced by $F(\nu)$ (with $F'=f$) and the interaction term is divided by $2$. The equilibrium corresponds indeed to the case where agents selfishly minimise their own cost 
\begin{equation*}
  c(x,.)+\V[\nu]=c(x,.)+f(\nu(.))+v + \int_{Y} \phi(y,z) \dd \nu(z)\;.
\end{equation*}
This individual minimisation has of course no reason to correctly estimate the marginal effect of individual behaviour on the total social cost. In other words, there is some gap between the equilibrium and the efficient (social-cost minimising) configurations, and, since we are dealing with a situation with externalities, this is actually not surprising. The computation of the equilibrium and the optimum can be done numerically in dimension 1 by using the same kind of numerical computations as explained in Section~\ref{num1}, see Figure~\ref{fig:eqopt}.
\begin{figure}[h!]
 \begin{minipage}[t]{.49\linewidth}
\centering\epsfig{figure=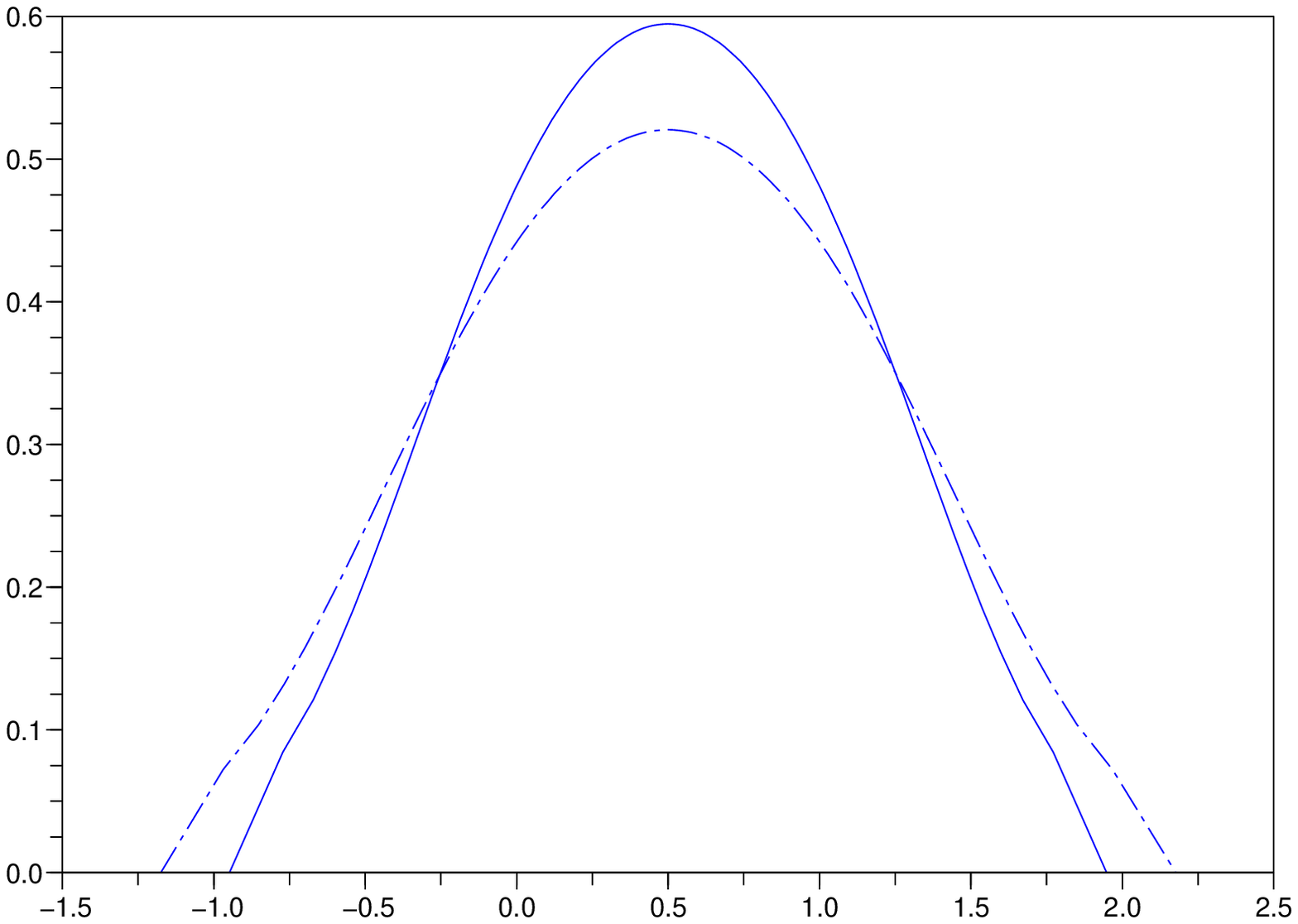,width=6.4cm}
 \end{minipage} \hfill
\begin{minipage}[t]{.49\linewidth}
\centering\epsfig{figure=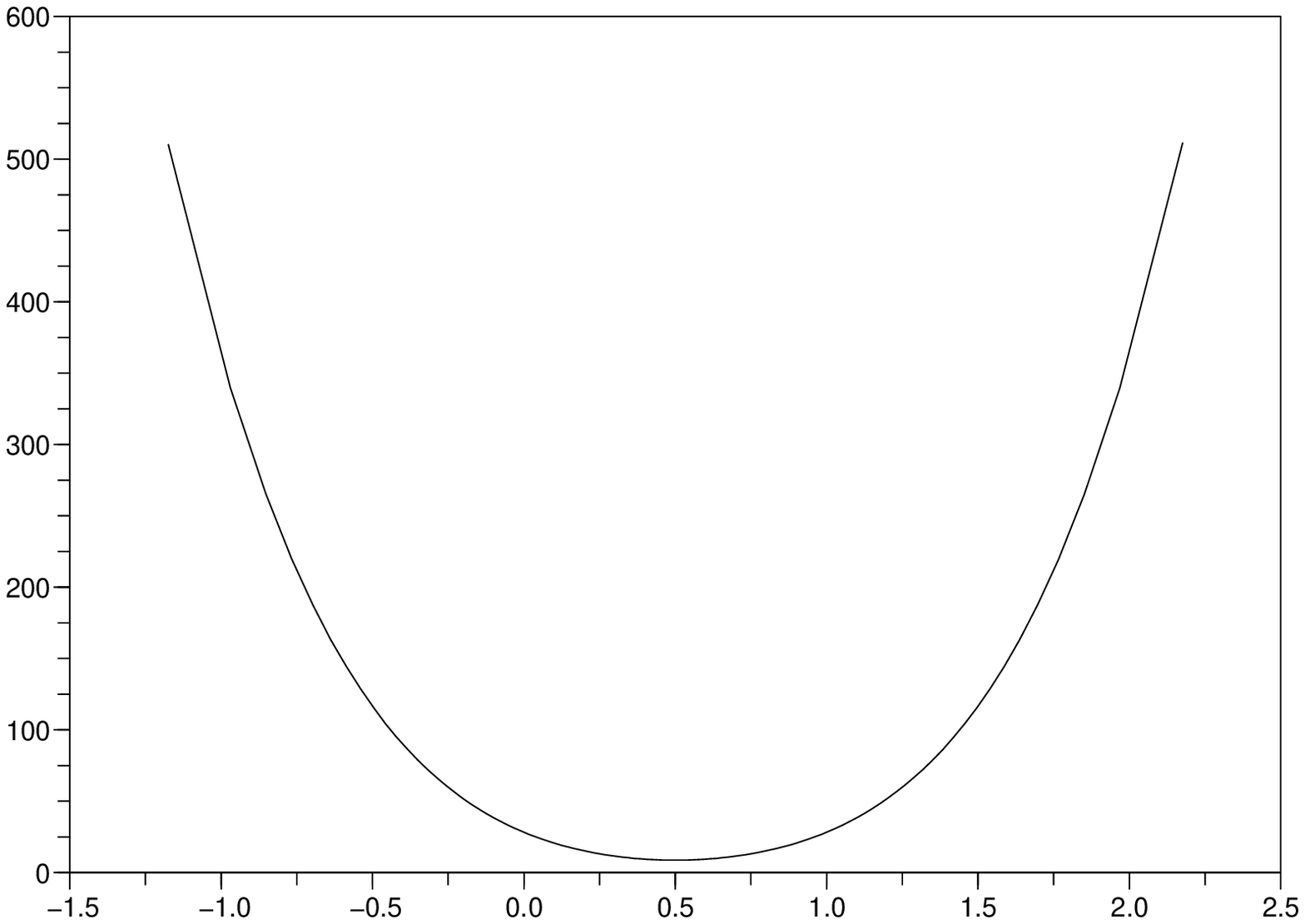,width=6.4cm}
 \end{minipage} 
\caption{\small The optimum in continuous line and the equilibrium in dash line on the left. The corresponding taxes on the right.}\label{fig:eqopt}
\end{figure}

The natural way to restore efficiency of the equilibrium is the design by some social planner of a proper system of tax/subsidies which, added to $\V[\nu]$, will implement the efficient configuration (or at least a stationary point of the social cost). Thanks to our variational approach, a tax system that restores the efficiency is easy to compute (up to an additive constant):
\[{\mathrm{Tax}}[\nu](y)= f(\nu(y))\, \nu(y)-F(\nu(y))+\int_{Y} \phi(y,z) \dd\nu(z).\]
The two terms in  ${\mathrm{Tax}}[\nu]$  represent respectively a correction to the individual estimation of congestion cost and to the individual estimation of interaction cost. A similar inefficiency of equilibria, arises in the slightly different framework of congestion games, where it is usually referred under the name \emph{cost of anarchy}, which has been extensively studied  in recent years (see~\cite{rough} and the references therein). In our Cournot-Nash context, we may similarly define the cost of anarchy as the ratio of the worst social cost of an equilibrium to the minimal social cost value:
\begin{equation*}
  \mbox{Cost of anarchy}:=\frac {\max \{ {\mathrm{SC}}[\nu^e]\; : \; \mbox{ $\nu^e$ equilibrium}\}}{\min_{\nu} {\mathrm{SC}}[\nu]}\;.
\end{equation*}
In the previous numerical example of Figure~\ref{fig:eqopt}, both the equilibrium and the optimum are unique and the cost of anarchy can be numerically computed as being approximately $1.8$.
\subsection{A dynamical perspective}
Instead of minimising $\J_\mu$ directly, we may think that agents start with some distribution of strategies (that is not an equilibrium) and adjust it with time by a sort of gradient descent dynamics to decrease their individual cost dynamically. At least formally, a way to reach the equilibrium (or minimiser of $\J_\mu$) is then to put it into the dynamical perspective of the minimising movement scheme as follows. Fix a time step $\tau>0$ and start with an initial configuration of strategies $\nu_0$. The first step of the minimising movement scheme selects a new distribution of strategies $\nu_1$ close to $\nu_0$ (in $\W_2$) but also decreasing $\J_\mu$ by
\[\nu_1 \in \argmin_\nu \left\{\frac{1}{2\tau}\W^2_2(\nu_0, \nu)+\J_\mu[\nu]\right\}\;.\]
And then it iterates the process by choosing 
\begin{equation}
  \label{eq:jkoc}
  \nu_{k+1} \in \argmin_\nu \left\{\frac{1}{2\tau}\W^2_2(\nu_k, \nu)+\J_\mu[\nu]\right\}\;.
\end{equation}
This sort of Wasserstein Euler scheme  was first introduced in~\cite{jko} for the Fokker-Planck equation. Under suitable conditions it is possible to pass the continuous limit $\tau \to 0^+$ in the minimising scheme~\eqref{eq:jkoc} and prove that the solution converges in some sense to the continuous evolution equation
\begin{equation*}
\left\{
  \begin{array}{l}
     \partial_t \nu+ \dive\left(-\nu \nabla\left( \dfrac{ \delta \J_\mu}{\delta \nu}\right)\right)=0, \vspace{.23cm}\\
\nu_{t=0}=\nu_0
  \end{array}
 \right.
\end{equation*}
which is the gradient flow of $\J_\mu$ in the Wasserstein space (see \cite{AGS} for a detailed exposition of the theory). By construction $\J_\mu$ is a Lyapunov function of this equation and even though the equation may have non-unique solutions, by Lyapunov theory, it can be shown under appropriate conditions that its trajectories converge in large time to the unique minimiser of $\J_\mu$ {\it i.e.} the equilibrium. If we go back to the individual level, it can be shown that the equation above corresponds to the fact that each agent modifies her strategy according to the gradient flow of her individual cost.

We obtain a sequence of densities which converges to the equilibrium, see Figure~\ref{fig:var5}. The descent algorithm is very fast and the computed equilibrium  is very stable with respect to the initial density for the gradient descent as shown in the left hand figure of Figure~\ref{fig:var6}.

\begin{figure}[ht!]
 \begin{minipage}[t]{.49\linewidth}
\centering\epsfig{figure=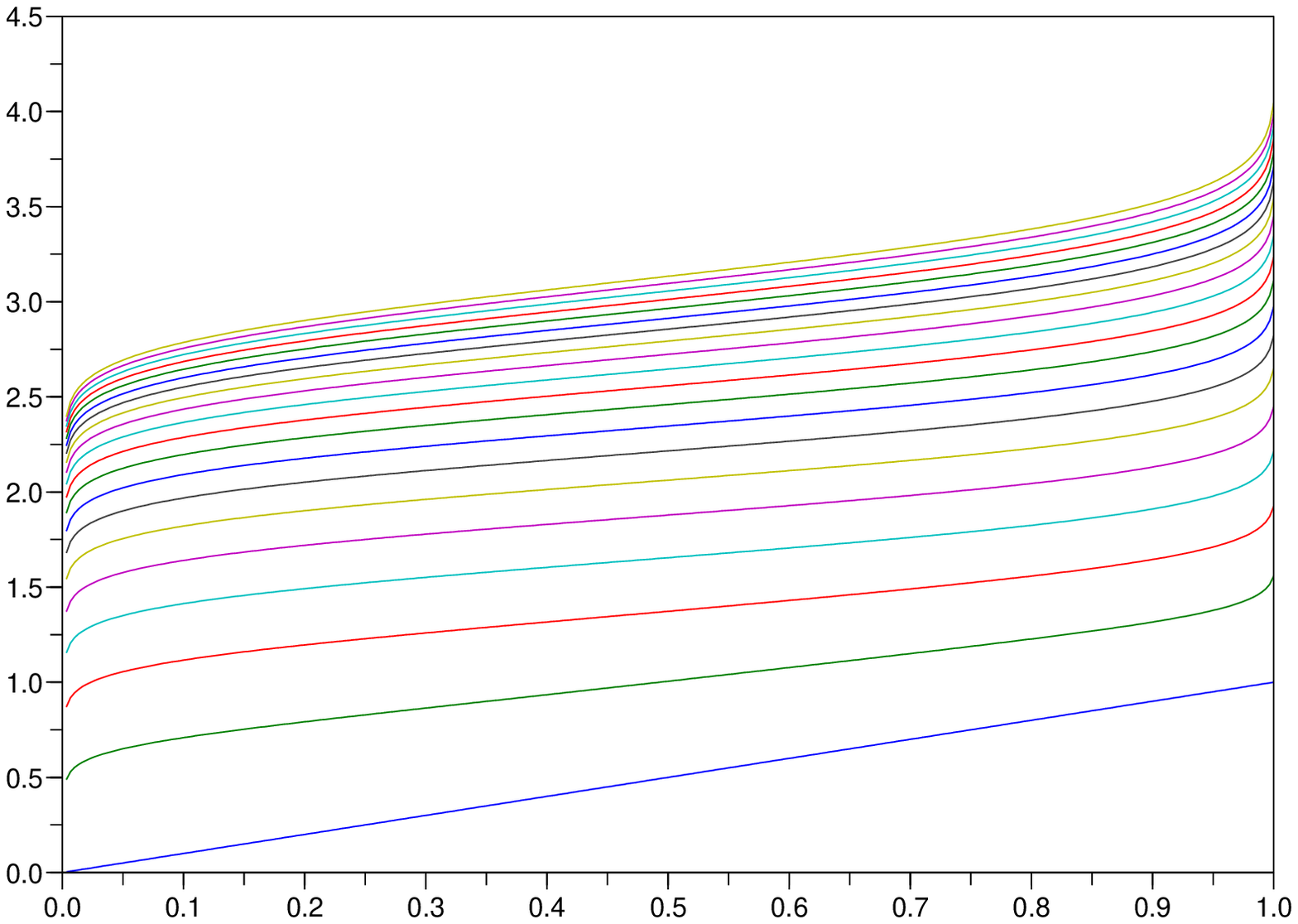,width=6.4cm}
 \end{minipage}\hfill
 \begin{minipage}[t]{.49\linewidth}
\centering\epsfig{figure=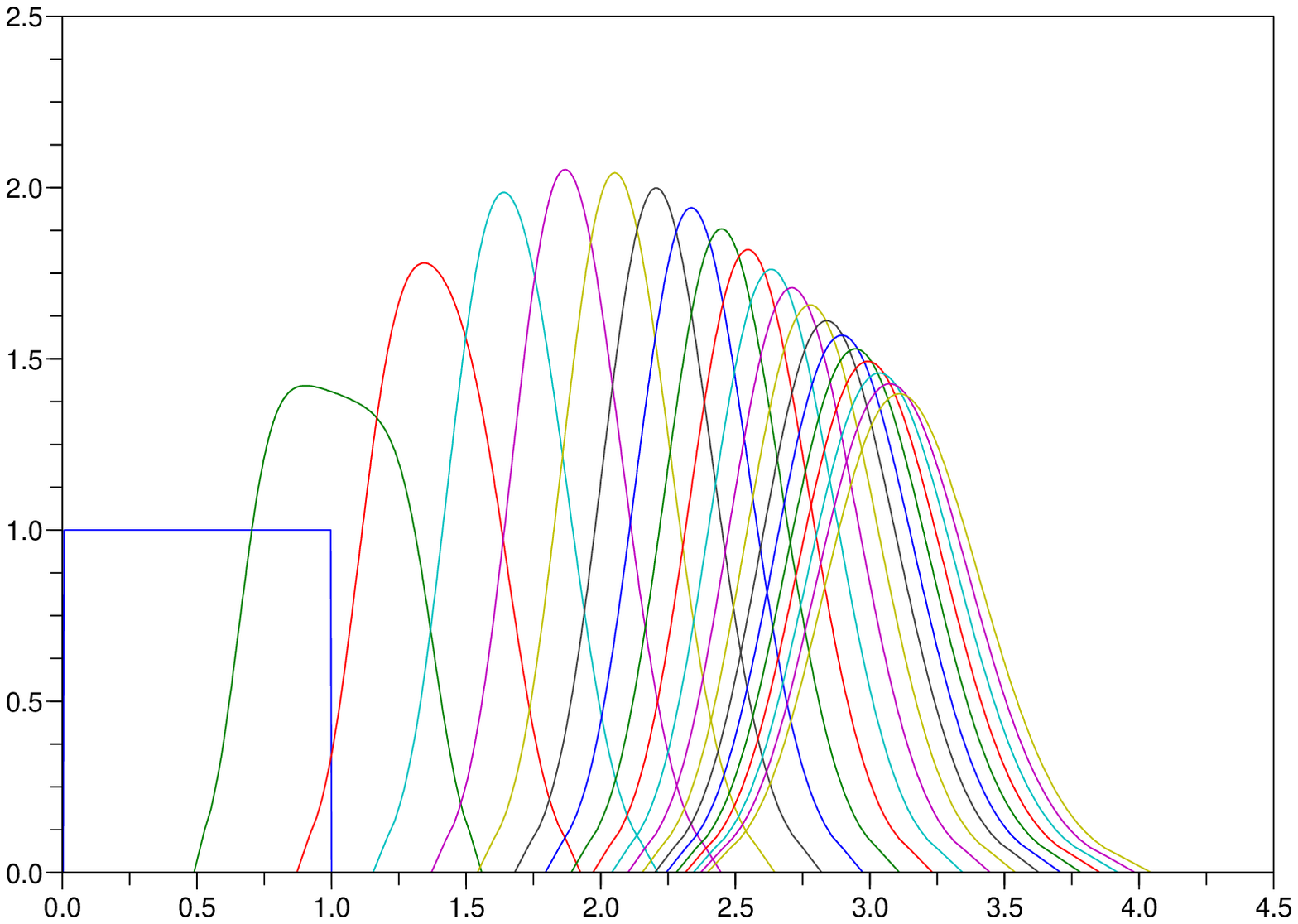,width=6.4cm}
 \end{minipage}
   \caption{\small Convergence and stabilisation toward the equilibrium in the case of a logarithmic congestion, cubic interaction, and a potential $v(x):=(x-5)^3$ with $\un_{[0,1]}$ as initial guess. The inverse of the cumulative function on the left and the corresponding density on the right.}\label{fig:var5}
\end{figure}
\begin{figure}[ht!]
 \begin{minipage}[t]{.49\linewidth}
\centering\epsfig{figure=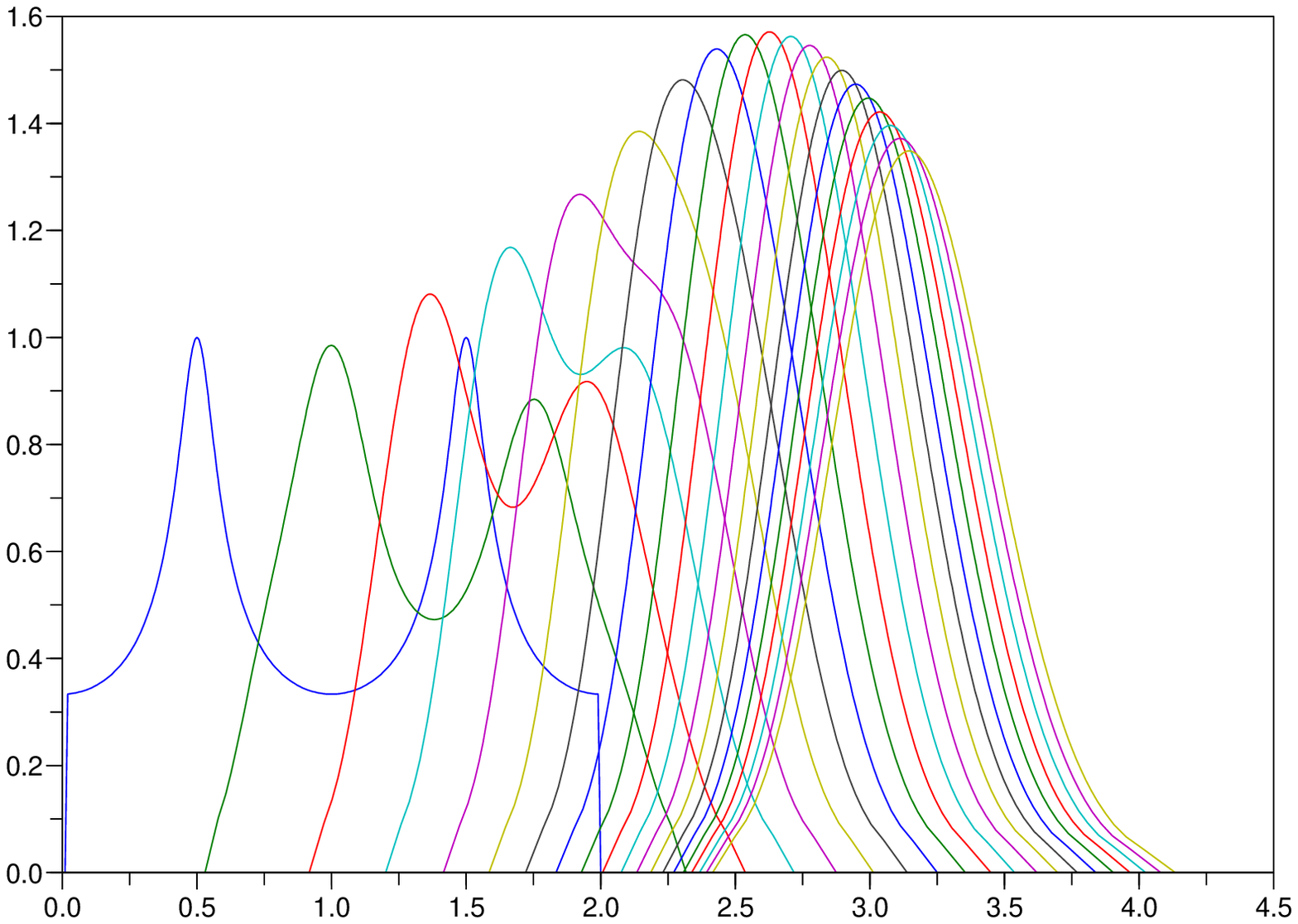,width=6.4cm}
 \end{minipage}\hfill
 \begin{minipage}[t]{.49\linewidth}
\centering\epsfig{figure=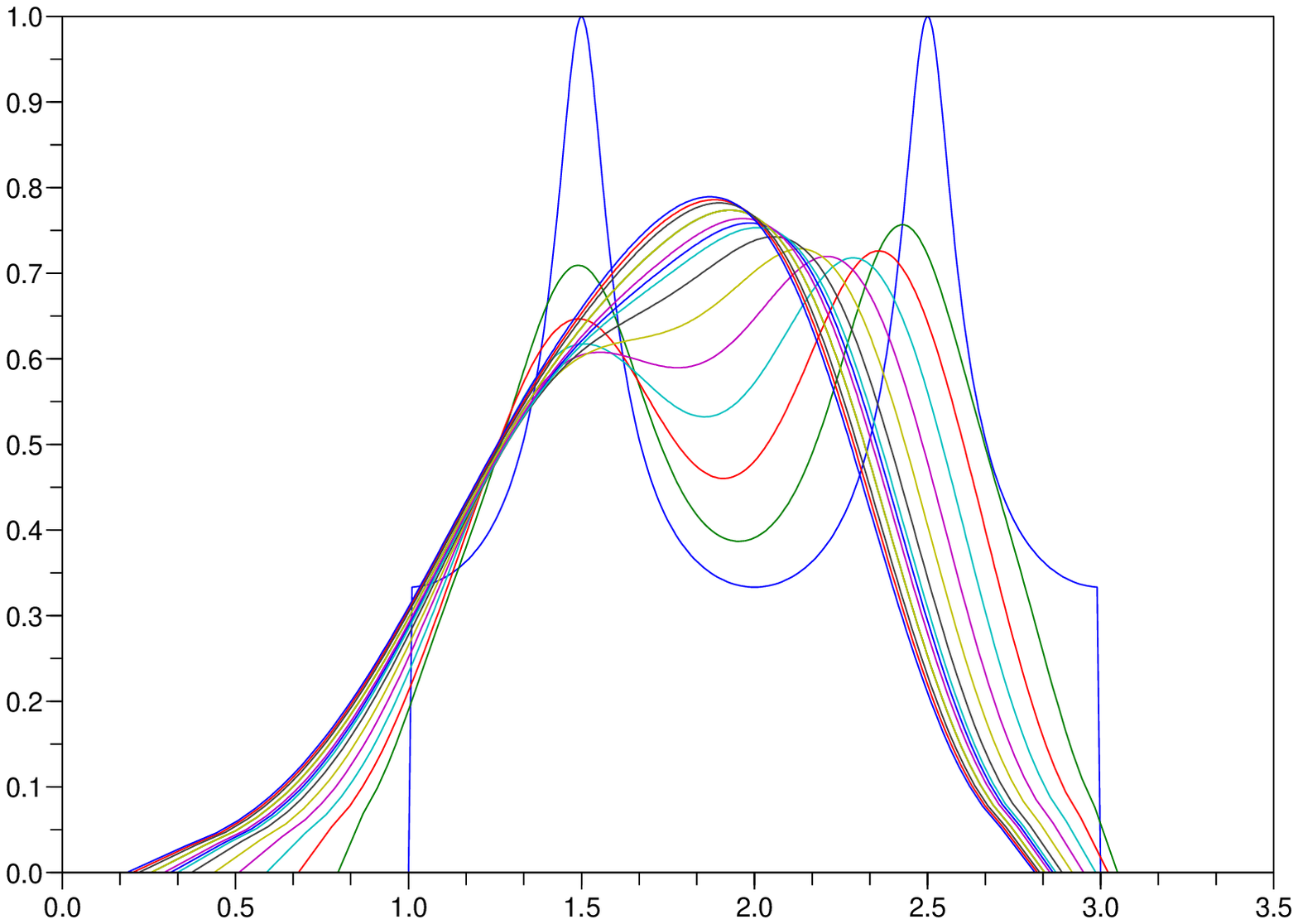,width=6.4cm}
 \end{minipage}
   \caption{\small Convergence and stabilisation toward the equilibrium in the case of a logarithmic congestion, cubic interaction with an initial guess made of two bumps. The potential is $v(x):=(x-5)^3$ on the left and $v(x):=(x-1/2)^3$ on the right.}\label{fig:var6}
\end{figure}
\bigskip

\noindent{\bf{Acknowledgements}}  The authors acknowledge the support of the Agence Nationale de la Recherche through the projects ANR-09-JCJC-0096-01 EVaMEF and ANR-07-BLAN-0235 OTARIE. The authors wish to thank Jocelyn Donze, Andr\'e Grimaud, Michel Le Breton, J\'er\^ome Renault, Fran\c{c}ois Salani\'e and the participants to the IAST LERNA - Eco/Biology and Chicago University Seminars for many interesting and fruitful discussions about the present work. 
\appendix

\section{The optimal transport toolbox}\label{tool}
This appendix just gives some basic results from optimal transport theory that we have used in the paper, for a detailed exposition of this rich and rapidly developing subject, we refer the interested reader to the very accessible textbook~\cite{villani} or~\cite{AGS,villani2} or, the more probability-oriented textbook ~\cite{raru}.

\subsection*{Kantorovich duality}

Let $X$ and $Y$ be two compact spaces equipped respectively with the Borel probability measures $\mu\in \P(X)$ and $\nu\in \P(Y)$. For $\mu\in \P(X)$ and $T$, Borel: $X\to Y$, $T_\#\mu$ denotes the {\it push forward} (or image measure) of $\mu$ through $T$ which is defined by $T_\#\mu(B)=\mu(T^{-1}(B))$ for every Borel subset $B$ of $Y$ or equivalently by the change of variables formula
\begin{equation}\label{changevar}
\int_Y \varphi \dd T_\#\mu=\int_X \varphi(T(x)) \dd\mu(x), \; \forall \varphi \in \C(X). 
\end{equation}
A transport map between $\mu$ and $\nu$ is a Borel map such that $T_\#\mu=\nu$. Now, let $c\in \C(X\times Y)$ be some transport cost function, the {\it Monge optimal transport} problem for the cost $c$  consists in finding a transport $T$ between $\mu$ and $\nu$ that minimises the total transport cost $\int_X c(x, T(x)) \dd\mu(x)$. A minimiser is then called an {\it optimal transport}. Monge problem is in general difficult to solve (it may even be the case that there is no transport map, for instance it is impossible to transport one Dirac mass to a sum of distinct Dirac masses), this is why  Kantorovich relaxed Monge's formulation as 
\begin{equation}\label{relaxkanto}
\W_c(\mu, \nu):=\inf_{\gamma \in \Pi(\mu, \nu)} \int_{X\times Y} c(x,y) \dd\gamma(x,y) 
\end{equation}
where $\Pi(\mu, \nu)$ is the set of transport plans between $\mu$ and $\nu$ {\it i.e.}  Borel probability measures on $X\times Y$ having $\mu$ and $\nu$ as marginals. Since $\Pi(\mu, \nu)$ is weakly $*$ compact and $c$ is continuous, it is easy to see that the infimum of the linear program defining $\W_c(\mu, \nu)$ is attained at some $\gamma$, such optimal $\gamma$'s are called {\it optimal transport plans} (for the cost $c$) between $\mu$ and $\nu$. If there is an optimal $\gamma$ which is induced by a {\it transport map} {\it i.e.} is of the form $\gamma=(\id, T)_\#\mu$ for some transport map $T$ then $T$ is obviously an optimal solution to Monge's problem. Another advantage of the linear relaxation is that it possesses a dual formulation that can be very useful. This dual formulation consists in maximising the linear form $\int_X \varphi \dd\mu+\int_Y \psi \dd\nu$ among all pairs $(\varphi, \psi)\in \C(X)\times \C(Y)$ such that $\varphi(x)+\psi(y)\leq c(x,y)$, it is easy to see that this can be reformulated as a maximisation over $\varphi$ only:
\begin{equation}\label{dualkanto}
\W_c(\mu, \nu):=\sup_{\varphi \in \C(X)} \Big\{ \int_X \varphi \dd\mu + \int_Y \varphi^c \dd\nu \Big\}
\end{equation}
where $\varphi^c$ is the $c$-concave transform of $\varphi$ {\it i.e.} 
\[\varphi^c(y):=\min_{x\in X} \{c(x,y)-\varphi(x)\}, \; \forall y\in Y.\]
Formula~\eqref{dualkanto} is usually called {\it Kantorovich duality formula} and a maximiser $\varphi$ in~\eqref{dualkanto} is called a {\it Kantorovich potential} between $\mu$ and $\nu$ for the cost $c$. The existence of Kantorovich potentials under our assumptions is well-known (see~\cite{villani,villani2,raru}) and we observe that if $\varphi$ is a Kantorovich potential then so is $\varphi+C$ for every constant $C$. 

We have used in Section \ref{varapp} the following result on the uniqueness of the Kantorovich potential and the differentiability of $\W_c(\mu, \nu)$ with respect to $\nu$:

\begin{lem}\label{diffkanto}
Assume that $X=\clos{\Omega}$ where $\Omega$ is some open bounded connected subset of $\R^d$ with negligible boundary, that $\mu$ is equivalent to the Lebesgue measure on $X$ (that is both measures have the same negligible sets) and that for every $y\in Y$, $c(.,y)$ is differentiable with $\nabla_x c$ bounded on $X\times Y$, let $\nu\in \P(Y)$ then there exists a unique (up to an additive constant) Kantorovich potential $\varphi$ between $\mu$ and $\nu$ and for every $\rho\in \P(Y)$ one has 
\[\lim_{\eps\to 0^+} \frac{\W_c(\mu, \nu+\eps(\rho-\nu))-\W_c(\mu, \nu)}{\eps}=\int_Y \varphi^c \dd (\rho-\nu).\]
\end{lem}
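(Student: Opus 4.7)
The plan is to proceed in two stages: first establish uniqueness of the Kantorovich potential modulo additive constants, then derive the derivative formula by a classical envelope and compactness argument.

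\textbf{Uniqueness.} Any Kantorovich potential $\varphi$ may be replaced by its double $c$-transform $(\varphi^c)^c\ge\varphi$, which is still a maximiser in~\eqref{dualkanto}, is $c$-concave, and has the same $c$-transform; so I may assume from the outset that the competing potentials are $c$-concave. Since $\nabla_x c$ is bounded on $X\times Y$ by some constant $L$, every such $c$-concave function is $L$-Lipschitz on $X$, hence differentiable at Lebesgue-a.e.\ $x\in\Omega$ by Rademacher's theorem. Let $\varphi_1,\varphi_2$ be two $c$-concave potentials and $\gamma$ any optimal plan: duality forces $\varphi_i(x)+\varphi_i^c(y)=c(x,y)$ on $\supp\gamma$, while the opposite inequality holds everywhere. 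At an $x$ where both $\varphi_i$ are differentiable and some $y$ satisfies $(x,y)\in\supp\gamma$, the map $x'\mapsto c(x',y)-\varphi_i(x')$ attains its minimum $\varphi_i^c(y)$ at $x$, which yields $\nabla\varphi_i(x)=\nabla_x c(x,y)$, hence $\nabla\varphi_1(x)=\nabla\varphi_2(x)$. Because the first marginal of $\gamma$ is $\mu$ and $\mu$ is equivalent to Lebesgue measure, such $x$ fill a set of full Lebesgue measure in $\Omega$. Connectedness of $\Omega$ and negligibility of its boundary then force the Lipschitz function $\varphi_1-\varphi_2$ to be constant on $\overline\Omega$.

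\textbf{Envelope sandwich.} Set $\nu_\eps:=(1-\eps)\nu+\eps\rho$ and let $\varphi_\eps$ be a $c$-concave Kantorovich potential between $\mu$ and $\nu_\eps$. Using $\varphi$ as an admissible dual variable for $\W_c(\mu,\nu_\eps)$ and $\varphi_\eps$ as one for $\W_c(\mu,\nu)$, and subtracting from the identities that define $\varphi$ and $\varphi_\eps$ as maximisers in~\eqref{dualkanto}, division by $\eps>0$ produces the two-sided estimate
\[\int_Y \varphi^c\dd(\rho-\nu)\;\le\;\frac{\W_c(\mu,\nu_\eps)-\W_c(\mu,\nu)}{\eps}\;\le\;\int_Y \varphi_\eps^c\dd(\rho-\nu).\]
It therefore suffices to show that $\int_Y \varphi_\eps^c\dd(\rho-\nu)\to \int_Y\varphi^c\dd(\rho-\nu)$ as $\eps\to 0^+$.

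\textbf{Passage to the limit.} Normalise $\varphi_\eps$ by $\varphi_\eps(x_0)=0$ at a fixed $x_0\in\Omega$; then $c$-concavity together with the bound $L$ makes the family $\{\varphi_\eps\}$ equi-bounded and equi-Lipschitz on the compact set $X$. By Arzel\`a-Ascoli, any sequence $\eps_n\to 0^+$ admits a subsequence along which $\varphi_{\eps_n}\to\bar\varphi$ uniformly on $X$; uniform continuity of $c$ then yields $\varphi_{\eps_n}^c\to\bar\varphi^c$ uniformly on $Y$. Passing to the limit in $\W_c(\mu,\nu_{\eps_n})=\int_X\varphi_{\eps_n}\dd\mu+\int_Y\varphi_{\eps_n}^c\dd\nu_{\eps_n}$, together with the weak-$*$ continuity of $\nu\mapsto\W_c(\mu,\nu)$ (itself a routine Arzel\`a-Ascoli argument on the dual), identifies $\bar\varphi$ as a Kantorovich potential between $\mu$ and $\nu$. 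Uniqueness then gives $\bar\varphi=\varphi+C$ for some constant $C$, so $\bar\varphi^c=\varphi^c-C$, and the constant cancels against the mean-zero signed measure $\rho-\nu$. Every subsequential limit thus equals $\int_Y\varphi^c\dd(\rho-\nu)$, which gives the announced convergence.

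\textbf{Main obstacle.} The delicate step is uniqueness: it hinges on the availability, at Lebesgue-a.e.\ $x\in\Omega$, of some $y$ with $(x,y)\in\supp\gamma$ (which uses the marginal property of $\gamma$ together with the Lebesgue equivalence of $\mu$), and on upgrading the a.e.\ gradient identity $\nabla\varphi_1=\nabla\varphi_2$ to a pointwise identity of continuous functions via connectedness of $\Omega$ and the negligibility of its boundary. The envelope and compactness steps afterwards are standard.
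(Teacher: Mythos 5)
Your proof is correct and follows essentially the same route as the paper's: the two-sided duality sandwich, the normalisation $\varphi_\eps(x_0)=0$, the Arzel\`a--Ascoli compactness of $(\varphi_\eps)_\eps$, and the identification of every subsequential limit with the unique normalised potential are exactly the argument given in Appendix~\ref{tool}. The only difference is that the paper delegates uniqueness of the potential to \cite[Proposition 6.1]{gcie}, whereas you prove it directly via the standard argument (reduction to $c$-concave potentials, Rademacher, the first-order condition $\nabla\varphi_i(x)=\nabla_x c(x,y)$ at points of $\supp\gamma$, and connectedness of $\Omega$) --- which is precisely the argument behind the cited result, so this is added self-containedness rather than a different approach.
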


\begin{proof}
The proof of the uniqueness of the  Kantorovich potential  $\varphi$ between $\mu$ and $\nu$ up to an additive constant can be found for instance in~\cite[Proposition 6.1]{gcie}. As a normalisation we choose the potential $\varphi$ such that $\varphi(x_0)=0$ where $x_0$ is some given point of $X$. To shorten notations, set $\nu_\eps=\nu+\eps(\rho-\nu)$, thanks to Kantorovich duality formula~\eqref{dualkanto} we have
\begin{equation}\label{liminfwass}
\eps^{-1}[\W_c(\mu, \nu_\eps)-\W_c(\mu, \nu)] \geq \int_Y  \varphi^c \dd (\rho-\nu)
\end{equation}
and similarly if $\varphi_\eps$ denotes the Kantorovich potential between $\mu$ and $\nu_\eps$ such that $\varphi_\eps(x_0)=0$, we have
\[\eps^{-1}[\W_c(\mu, \nu_\eps)-\W_c(\mu, \nu)] \leq \int_Y  \varphi_\eps^c \dd (\rho-\nu).\]
Now it is well-known  that $(\varphi_\eps)_\eps$ is bounded and uniformly equi-continuous uniformly with respect to $\eps$ hence, thanks to Ascoli's Theorem, up to a sub-sequence, it converges uniformly to some $\clos{\varphi}\in \C(X)$ such that $\clos{\varphi}(x_0)=0$ and it is easy to see that $\clos{\varphi}$ is a Kantorovich potential between $\mu$ and $\nu$ so that $\varphi=\clos{\varphi}$ and $\varphi_\eps^c$ converges to $\varphi^c$. We then have 
\[\limsup_{\eps\to 0^+}\eps^{-1}[\W_c(\mu, \nu_\eps)-\W_c(\mu, \nu)] \leq \int_Y  \varphi^c \dd (\rho-\nu).\]
The desired result thus follows from~\eqref{liminfwass}. 
\end{proof}

When $X=Y$ and denoting by $d$ the distance on $Y$, for $p\in [1, +\infty[$, the $p$-Wasserstein distance between $\mu\in \P(X)$ and $\nu\in \P(X)$ is by definition
\begin{equation}
\W_p(\mu, \nu):= \Big( \inf_{\gamma\in \Pi(\mu, \nu)} \Big\{ \int_{X\times Y} d(x,y)^p \dd\gamma(x,y)\Big\}   \Big)^{1/p}
\end{equation}
The Wasserstein distances are indeed distances and they metrise the weak $*$ topology of $\P(Y)$. 
For $p=1$, it is well-known that  the Kantorovich duality formula can be rewritten as
\[\W_1(\mu, \nu)=\sup \Big\{ \int_X \varphi \dd(\mu-\nu) \; : \; \varphi \mbox{ 1-Lipschitz} \Big\} \]
so that for every Lipschitz continuous function $\varphi$ on $X$, one has 
\[\Big\vert  \int_X \varphi \dd(\mu-\nu)   \Big \vert \leq \lip(\varphi, X) \W_1(\mu, \nu),\]
an inequality we will use several times later on.  As a simple illustration of the interest of the distance $\W_1$, let us equip $Y^m$ with the distance $(x,y)=(x_1, \ldots, x_m, y_1,\ldots y_m)\mapsto d_m(x,y):=\sum_{k=1}^m d(x_k, y_k)$ for $\nu$ and $\theta$ in $\P(Y)$, let $\gamma$ be an optimal transport plan between $\mu$ and $\nu$ for $\W_1$ then since $\gamma^{\otimes m}:=\gamma\otimes \cdots \otimes \gamma$ has marginals $\nu^{\otimes m}$ and $\theta^{\otimes m}$, we have
\[\W_1(\nu^{\otimes m}, \theta^{\otimes m})\leq \int_{Y^m\times Y^m} d_m \dd\gamma^{\otimes m}=m\W_1(\nu, \theta)\;.\]
Which shows in particular that if $(\nu_n)_n$ weakly $*$ converges to $\nu$ then  $(\nu_n^{\otimes m})_n$ weakly $*$ converges to $\nu^{\otimes m}$ {\it i.e.} $\int_{Y^m} \varphi \dd\nu_n^{\otimes m}$ converges to $\int_{Y^m} \varphi \dd\nu^{\otimes m}$ as $n\to \infty$ for every $\phi\in \C(Y^m)$.

Of particular interest is also the quadratic case $p=2$ in an euclidean setting for which a brief summary of the main results used in the paper is given in the next paragraphs.

\subsection*{The quadratic case and Monge-Amp\`ere equation}\label{quadma}

We now restrict ourselves to the quadratic case, the solution of the quadratic optimal transport problem is due to Yann Brenier whose path-breaking paper \cite{bre} totally renewed the field of optimal transport and was the starting point of an extremely active stream of research since the 90's.

\begin{thm}[Brenier's theorem]\label{brenierthm}
Let $\mu\in \P(\R^d)$ be absolutely continuous with respect to the Lebesgue measure  and compactly supported and $\nu\in \P(\R^d)$ be compactly supported, then the quadratic optimal transport problem
\[W_2^2(\mu, \nu):=\inf_{\gamma\in \Pi(\mu, \nu)} \iint_{\R^d\times \R^d} \vert x-y\vert^2 \dd\gamma(x,y)\] 
possesses a unique solution $\gamma$ which is in fact a Monge solution $\gamma=(\id, T)_\#\mu$. Moreover $T=\nabla u$ $\mu$-a.e. for some convex function $u$ and $\nabla u$ is the unique (up to $\mu$-a.e. equivalence)  gradient of a convex function transporting $\mu$ to $\nu$; $T=\nabla u$ is called the {\it Brenier map} between $\mu$ and $\nu$. 
\end{thm}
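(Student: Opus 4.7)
The plan is to follow the classical duality strategy. First I would expand the cost as
\[
\tfrac12|x-y|^2 = \tfrac12|x|^2 + \tfrac12|y|^2 - x\cdot y,
\]
so that, since $\int |x|^2 d\mu$ and $\int |y|^2 d\nu$ are finite and fixed, minimising $\int \tfrac12|x-y|^2 \, d\gamma$ over $\Pi(\mu,\nu)$ is equivalent to maximising the linear functional $\int x\cdot y\, d\gamma$. Applying the Kantorovich duality formula \eqref{dualkanto} to the continuous cost $\tilde{c}(x,y) = -x\cdot y$, I obtain a potential $\varphi$ on $\spt\mu$ with $\tilde c$-transform $\varphi^{\tilde c}$, and an optimal plan $\gamma$ such that
\[
\varphi(x) + \varphi^{\tilde c}(y) \le -x\cdot y \quad\text{everywhere, with equality } \gamma\text{-a.e.}
\]

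Second, the goal is to extract a convex potential from $\varphi$. Set $u(x) := \sup_{y\in\spt\nu}\{x\cdot y + \varphi^{\tilde c}(y)\}$. As the supremum of affine functions, $u$ is convex, finite, and Lipschitz on any bounded set (using compactness of $\spt\nu$). By construction $u(x) \ge x\cdot y + \varphi^{\tilde c}(y)$ for every $y$, and the inequality $\varphi(x) + \varphi^{\tilde c}(y) \le -x\cdot y$ yields $u(x) \le -\varphi(x)$, while equality holds $\gamma$-a.e. So, replacing $-\varphi$ by its upper envelope $u$, one can assume without loss of generality that
\[
u(x) + u^*(y) \ge x\cdot y \quad\forall (x,y), \qquad u(x)+u^*(y)=x\cdot y \quad \gamma\text{-a.e.},
\]
where $u^*$ denotes the standard Legendre conjugate.

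Third, convexity of $u$ together with the absolute continuity of $\mu$ is the key mechanism: Rademacher's theorem gives differentiability of the locally Lipschitz function $u$ outside a Lebesgue-negligible set, hence $\mu$-a.e. At any such point $x$, the classical characterisation of equality in the Young--Fenchel inequality forces $y = \nabla u(x)$ as the unique $y$ with $u(x)+u^*(y)=x\cdot y$. Therefore $\gamma$ is concentrated on the graph of $\nabla u$, which means $\gamma = (\id,\nabla u)_\#\mu$; taking the second marginal yields $(\nabla u)_\#\mu = \nu$. This construction simultaneously produces a convex $u$ and shows that every optimal plan collapses to the same Monge map.

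For uniqueness, if $v$ is another convex function with $(\nabla v)_\#\mu=\nu$, then $\eta:=(\id,\nabla v)_\#\mu$ lies in $\Pi(\mu,\nu)$ and the inequality $v(x)+v^*(y)\ge x\cdot y$ (with equality at $y=\nabla v(x)$ whenever $v$ is differentiable) implies, by integrating against $\eta$ and using Kantorovich duality, that $\eta$ is also optimal; by the previous paragraph $\eta=(\id,\nabla u)_\#\mu$, so $\nabla v=\nabla u$ $\mu$-a.e. I expect the main technical hurdle to be precisely the passage from $\gamma$-a.e. equality to the pointwise identification $y=\nabla u(x)$ $\mu$-a.e., which hinges on the interplay between convexity, the absolute continuity hypothesis on $\mu$, and Rademacher's theorem; the rest of the argument is essentially a careful bookkeeping of the duality relations.
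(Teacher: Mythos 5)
Note first that the paper does not prove this statement at all: Theorem~\ref{brenierthm} appears in the optimal-transport toolbox appendix as a classical result, with the proof delegated to the cited literature (\cite{bre}, \cite{mcgan}, \cite{McC3}, \cite{villani}), so there is no in-paper argument to compare against. Your sketch is essentially the standard duality proof that those references give, and it is correct in its architecture: expanding the quadratic cost to reduce the problem to maximising $\int x\cdot y \dd\gamma$ over $\Pi(\mu,\nu)$, upgrading the Kantorovich potential to a convex function $u$ via the envelope $u(x)=\sup_{y\in\spt\nu}\{x\cdot y+\varphi^{\tilde c}(y)\}$, invoking Rademacher's theorem together with the absolute continuity of $\mu$ to differentiate $u$ $\mu$-a.e., and using the equality case of the Young--Fenchel inequality to force $y=\nabla u(x)$ on the support of the plan. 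Two points deserve to be made explicit in a complete write-up. First, your uniqueness claim for the optimal plan rests on the fact that one \emph{fixed} dual maximiser certifies optimality of \emph{every} optimal plan: since the duality gap is zero, any optimal $\gamma'$ satisfies $\varphi(x)+\varphi^{\tilde c}(y)=-x\cdot y$ $\gamma'$-a.e.\ for the same $\varphi$, hence every optimal plan concentrates on the same contact set $\{u(x)+u^{*}(y)=x\cdot y\}$ and collapses to the same graph; as written, step one only asserts the equality for one plan, so this needs a sentence (existence of the dual maximiser is unproblematic here because both supports are compact and the cost is continuous, exactly the setting of \eqref{dualkanto}). Second, in the final step you should verify the routine integrability and measurability facts: that $v$ and $v^{*}$ are $\mu$- and $\nu$-integrable (which holds after replacing $v$ by the globally Lipschitz convex function $x\mapsto\sup_{y\in\spt\nu}\{x\cdot y-v^{*}(y)\}$, which has the same gradient $\mu$-a.e.), and that $x\mapsto\nabla u(x)$ is Borel on the full-measure differentiability set, so that $(\id,\nabla u)_{\#}\mu$ is well defined and the passage from ``$\gamma$ concentrated on a graph'' to $\gamma=(\id,\nabla u)_{\#}\mu$ goes through. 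These are standard repairs; modulo them, your proposal is a faithful and correct reconstruction of the classical argument behind the theorem the paper quotes.
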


In fact the previous theorem holds under much more general assumptions (it is enough that $\mu$ and $\nu$ have finite second moments and that $\mu$ does not charge sets of Hausdorff dimension less than $d-1$, see \cite{McC3} or \cite{villani}). Brenier's theorem roughly says that there is a unique optimal transport for the quadratic cost and that it is characterised by the fact that it is of the form $\nabla u$ with $u$ convex, in other words, solving $\nabla u_\# \mu=\nu$  with $u$ convex determines $\nabla u$ uniquely $\mu$-a.e.. When we have additional regularity, {\it i.e.} when $\mu$ and $\nu$ have regular densities (still denoted $\mu$ and $\nu$) and $\nabla u$ is a diffeomorphism between the support of $\mu$ and that of $\nu$, thanks to the change of variables formula, we find that $u$ solves the Monge-Amp\`ere partial differential equation:
\begin{equation}\label{eq:mongeampere}
\mu=\nu(\nabla u) \det(D^2 u).
\end{equation}
A deep regularity theory due to Luis Caffarelli: \cite{caff1,caff2} implies that the Brenier map is a smooth diffeomorphism when in addition $\mu$ and $\nu$ are smooth, bounded away from $0$ and have  convex supports, in particular the Monge-Amp\`ere equation is satisfied in this case which justifies the computations of Section~\ref{pdeeq}.

\subsection*{Convexity along generalised geodesics}\label{cageod}

The last ingredient from optimal transport theory that we have used (in Section \ref{hidd}) is the powerful notion of displacement  convexity along generalised geodesics due to Ambrosio, Gigli-Savar\'e \cite{AGS}\footnote{Actually this notion of convexity is a slight variant of the notion of displacement convexity which first appeared in the seminal work of  \cite{McC2}. It is known that $\W^2_2(\mu, \nu)$, as a function of $\nu$ is not displacement convex in the sense of McCann (see example 9.1.5. in \cite{AGS}) and this is the very reason why, following Ambrosio, Gigli and Savar\'e, we consider convexity along generalised geodesics with base $\mu$ rather than the initial notion of McCann. Let us however indicate that, in dimension one, both notions coincide.}. As in Section \ref{eqeqv}, we assume that $X=Y=\clos{\Omega}$ where $\Omega$ is some open bounded convex subset of $\R^d$,  that the cost is quadratic, that $m_0$ is the Lebesgue measure on $X$  and that $\mu$ is absolutely continuous with respect to $m_0$ and has a positive density on $\Omega$. In particular for every $\nu\in \P(X)$, the Brenier's map between $\mu$ and $\nu$ is well-defined. Generalised geodesics with base $\mu$ for the Wasserstein distance $\W_2$ and the corresponding notion of convexity are defined as follows

\begin{defi}[Convexity along generalised geodesics]\label{defcgg}
Let $\nu\in \P(X)$, $\rho\in \P(X)$, let $T_0$ be the Brenier's map between $\mu$ and $\nu$ and let $T_1$  be Brenier's map between $\mu$ and $\rho$, the generalised geodesic with base $\mu$  between $\nu$ and $\rho$ is the curve of measures $t\in [0,1]\mapsto \nu_t:=((1-t)T_0+tT_1)_\# \mu$. The functional $\J$:  $\P(X) \to \R\cup \{+\infty\}$ is called \emph{convex along generalised geodesics with base $\mu$} if for every pair of  endpoints $\nu$ and $\rho$ in $\P(X)$ and for every $t\in[0,1]$, one has 
\[\J[\nu_t]\leq (1-t) \J[\nu]+t\J[\rho].\]
If, in addition, the previous inequality is strict for $t\in(0,1)$ and $\rho\neq \nu$, $\J$ is called \emph{strictly convex along generalised geodesics with base $\mu$}.
\end{defi}

In Section \ref{eqeqv}, we were interested in the strict convexity along generalised geodesics with base $\mu$ of the functional $\J_\mu$ defined by~\eqref{varequil2}. As in Paragraph \ref{hidddimone}, the convexity of
\[t\mapsto \int_{Y\times Y} \phi(y,z) \dd\nu_t (y) \dd\nu_t(z) \qtext{ and } t\mapsto \int_Y v \dd\nu_t\]
directly follows from the convexity of $\phi$ and $v$ respectively.  As for the convexity of $t\mapsto \int_Y F(\nu_t(y)) $ under  McCann's condition: 
\begin{equation}\label{geodcon2}
 \nu\mapsto \nu^d F(\nu^{-d}) \mbox{ is convex non-increasing on $(0,+\infty)$},
\end{equation}
it follows from~\cite[Proposition 9.3.9]{AGS}. Finally, in the functional $\J_\mu$  defined by \eqref{varequil2}, we had the term $\W^2_2(\mu, .)$, to see that it is strictly convex along generalised geodesics with base $\mu$, we can proceed exactly as we did for $\W_c(\mu,.)$ in dimension one in Paragraph~\ref{hidddimone}.

\section{Proofs of the results}\label{proof}

\subsection{Proof of Theorem~\ref{miniareeq}}\label{aminiareeq}
Let $\nu$ be a solution of~\eqref{varequil}, $\rho \in \D$ and $\eps\in (0,1)$,  we then have $\eps^{-1}(\J_\mu[\nu + \eps(\rho-\nu)]-\J_\mu[\nu])\geq 0$. Using the fact that $\V[\nu]$ is the first variation of $\E$ and Lemma~\ref{diffkanto}, we thus get
\begin{equation}\label{sinumero}
  \int_{Y} (\varphi^c +\V[\nu])\dd\rho\ge \int_{Y} (\varphi^c +\V[\nu])\dd\nu
\end{equation}
where $\varphi$ is a Kantorovich potential between $\mu$ and $\nu$ (it is unique up to an additive constant by Lemma~\ref{diffkanto} and this constant plays no role since $\rho$ and $\nu$ have the same mass). Minimising the left-hand side of~\eqref{sinumero}, with respect to $\L^p$ probabilities $\rho$, this yields that $\nu$-a.e.
\[\varphi^c +\V[\nu]=\inf_{\rho\in \D} \int_{Y}  (\varphi^c +\V[\nu])\dd\rho=M\]
where $M:=\mathrm{Essinf}  (\varphi^c +\V[\nu])$ denotes the essential infimum of $(\varphi^c +\V[\nu])$ {\it i.e.} the largest constant that bounds $(\varphi^c +\V[\nu])$ from below $m_0$-a.e.. Since $\gamma$ is an optimal transport plan we have $c(x,y)=\varphi(x)+\varphi^c(y)$ $\gamma$-a.e. whereas by definition $c(x,z)\geq \varphi(x)+\varphi^c(z)$ for all $(x,z)\in X\times Y$.  We thus have
\begin{equation*}
\left\{
  \begin{array}{ll}
   c(x,z)+\V[\nu](z)\geq M+\varphi(x) \quad&\mbox{for all $x\in X$ and $m_0$-a.e. $z\in Y$}\vspace{.1cm}\\
c(x,y)+\V[\nu](y)=M+\varphi(x)\quad&\mbox{for $\gamma$-a.e. $(x,y)$.}
  \end{array}
\right.
\end{equation*}
This proves that $\gamma$ is a Cournot-Nash equilibrium.  

\subsection{Proof of Corollary~\ref{coroexemple}}\label{acoroexemple}

Thanks to Theorem \ref{miniareeq}, it is enough to prove that~\eqref{varequil} admits solutions and to recall that the set $\Pi_o(\mu, \nu)$ is nonempty. Let $(\nu_n)_n$ be a minimising sequence of~\eqref{varequil}. Thanks to the growth condition~\eqref{growth1}, $(\nu_n)_n$ is bounded in $\L^p(m_0)$. It thus admits a (not relabelled) sub-sequence that converges weakly in $\L^p(m_0)$ (and thus in particular weakly $*$ in $\P(Y)$) to some $\nu\in \L^p(m_0)$. By the convexity of $F(y,.)$, the first term in $\E$ is lower-semi continuous for the weak topology of $\L^p(m_0)$.  By the continuity of $\phi$ the second term in $\E$ is continuous for the weak-$*$ topology of $\P(Y)$. Finally, the lower-semi continuity of $\W_c(\mu,.)$ for the weak-$*$ topology straightforwardly follows from the Kantorovich duality formula~\eqref{dualkanto}. We thus have
\begin{equation*}
  \inf_{\nu}\J_\mu(\nu)=\liminf_n \left\{\W_c(\mu,\nu_n)+\E[\nu_n]\right\}\ge \W_c(\mu,\nu)+\E[\nu]\;.
\end{equation*}
So that $\nu$ solves~\eqref{varequil}.
\subsection{Proof of Proposition~\ref{equiveqmin}}\label{aequiveqmin}
Assume that $\gamma$ is an equilibrium and let $\nu$ be its second marginal. Let then $\varphi$ be a Kantorovich potential between $\mu$ and $\nu$ such that
 \begin{equation}\label{eqmk2}
\left\{
   \begin{array}{ll}
     \varphi^c +\V[\nu] \geq 0  \quad&\mbox{ $m_0$-a.e.} \vspace{.1cm}\\
\varphi^c +\V[\nu] = 0 \quad&\mbox{ $\nu$-a.e.}
   \end{array}
\right.
 \end{equation}
 Let $\rho\in \D$, thanks to the Kantorovich duality formula~\eqref{dualkanto}, we first have
\begin{align*}
\W_c(\mu, \rho)-\W_c(\mu, \nu)\geq \int_Y \varphi^c \dd(\rho-\nu)\;.
\end{align*}  
By convexity of $\E$ and~\eqref{vgrad}, we obtain
\[\E[\rho]-\E[\nu] \geq \int_Y \V[\nu] \dd(\rho-\nu)\]
hence, finally using~\eqref{eqmk2} and the fact that $\rho$ is absolutely continuous with respect to $m_0$, we get
\[\J_\mu[\rho]-\J_\mu[\nu]\geq \int_Y ( \varphi^c+\V[\nu]) \dd(\rho-\nu)\geq 0\]
which means that $\nu$ solves~\eqref{varequil}.

\subsection{Proof of Lemma~\ref{conseqinada}}\label{aconseqinada}
The existence of a minimiser is similar to the proof of Corollary~\ref{coroexemple} since the coercivity of $F$ ensures that minimising sequences are  uniformly integrable and its convexity guarantees sequential weak lower semi continuity of $\J_\mu$.

Let $\nu$ solve~\eqref{varequil} and let us prove that it is bounded away from zero. Let us assume by contraction that $m_0(\{\nu \leq \lambda\})>0$  for every $\lambda>0$. Let $\delta_0>0$, $\delta \in (0, \delta_0)$ (to be chosen later on). Let $A:=\{x\,:\, \delta_0 \le \nu(x) \le M\}$ where $M>0$ is large enough so that $m_0(A)>0$. For small $\eps>0$ such that $\eps <\delta m_0(A)/2$ then define
\[\nu_\eps:=\nu+\eps (u_{A_\delta} -u_A)\]
where $A_\delta:=\{\nu\leq \delta\}$ and for $m_0(B)>0$, $u_B$ denotes the (sort of uniform probability on $B$) $u_B:=m_0(B)^{-1} \un_B$.  Since $\eps <\delta m_0(A)/2$ and $\nu>\delta$ on $A$, $\nu_\eps$ is a probability measure. By optimality of $\nu$, we then have 
\begin{equation}\label{nueppluscher}
0\leq \J_\mu[\nu_\eps]-\J_\mu[\nu]=\W_c(\mu, \nu_\eps)-\W_c(\mu, \nu)+\E[\nu_\eps]-\E[\nu].
\end{equation}
Denoting by $\varphi_\eps$ a Kantorovich potential between $\mu$ and $\nu_\eps$, we first have
\begin{equation*}
  \W_c(\mu, \nu_\eps)-\W_c(\mu, \nu)\leq \eps \int_{Y} \varphi_\eps^c \dd(u_{A_\delta} -u_A)\;.
\end{equation*}
And since $\varphi_\eps^c$ has a modulus of continuity that is uniform with respect to $\eps$ (that of $c$) and  $\varphi_\eps^c$ can be normalised so as to vanish at the same point, $\varphi_\eps^c$ is uniformly bounded independently of $\eps$ and $\delta$. So that $\W_c(\mu, \nu)-\W_c(\mu, \nu_\eps)\leq C_1\eps$ for some constant $C_1$. In a similar way, one finds a constant $C_2$ such that for $\eps$ small enough and uniformly in $\delta$ one has
\[\frac{1}{2}\iint_{Y^2} \phi(y,z)  \dd\nu_\eps(y)\dd(\nu_\eps- \nu)(z)\leq C_2 \eps.\]
Now it remains to estimate the last term namely
\[\begin{split}
\int_{Y} \left[F(\nu_\eps)-F(\nu)\right] \dd m_0=\int_{A_\delta} \left[F(\nu+\eps m_0(A_\delta)^{-1})-F(\nu) \right]\dd m_0\\
+\int_A \left[F(\nu-\eps m_0(A)^{-1})-F(\nu)\right]\dd m_0
\end{split}\]
 since $F$ is Lipschitz on $[\delta_0/2, M]$ the second term can be bounded from above by $C_3 \eps$  for a constant $C_3$ again independent of $\delta$ and $\eps$.  Now let $C:=C_1+C_2+C_3$ thanks to Inada's condition there is some $\delta_1<\delta_0/2$ such that $f \leq -C-1$ on $(0, 2\delta_1]$. Choosing $\delta\leq \delta_1$ and $\eps$ small enough so that $\eps m_0(A_\delta)^{-1}\leq \delta_1$, we then have 
\[\int_{A_\delta} \left[F(\nu+\eps m_0(A_\delta)^{-1})-F(\nu) \right]\dd m_0\leq (-C-1)\eps.\]
Putting everything together, the latter inequality gives the desired contradiction to~\eqref{nueppluscher}.

The proof of the upper bound is similar: one assumes that $\nu\notin \L^{\infty}(m_0)$ and then considers a perturbation of the form  $\nu_\eps:=\nu+\eps (u_{C} -u_{C_M})$ with $C_M:=\{\nu\geq M\}$, $M$ large and $C$ well chosen, the computations are the same as before and the contradiction comes from the Inada condition at $+\infty$: $F'(\nu)=f(\nu)\to+\infty$ as $\nu\to +\infty$. 
 \subsection{Proof of Theorem~\ref{uniqgeodcon}}\label{proofgeodcon}
 Uniqueness of a minimiser follows directly from the strict convexity along generalised geodesics with base $\mu$ of $\J_\mu$ which follows from McCann's condition~\eqref{geodcon1}, the convexity of $v$ and  $\phi$ and the strict convexity along generalised geodesics with base $\mu$ of $\W_2^2(\mu, .)$. 

 Let us now assume that $\gamma$ is an equilibrium and that $\nu$ is its second marginal, then, for some constant $M$,  we have:
 \begin{equation}\label{chepa}
 f(\nu) +v(y)+\int_{Y} \phi(y,z) \dd \nu(z)+\varphi^c \geq M \mbox{ a.e. with an equality $\nu$-a.e.}
 \end{equation}
Thanks to  Inada's condition and the fact that the right hand side is continuous, this implies $\nu$ is bounded away from zero so that~\eqref{chepa} actually is  an equality (Lebesgue) almost everywhere $\V[\nu]+\varphi^c=M$ and thus $\nu$ satisfies
\begin{equation*}
\nu(y)=f^{-1}\left(M-v(y)-\int_{Y} \phi(y,z) \dd \nu(z)-\varphi^c(y)\right)
\end{equation*}
and is therefore continuous.

Let us now prove that $\nu$ solves~\eqref{varequil2}, let $\rho$ be another probability measure (which we can assume to have a positive and continuous density as well), and let $t\in [0,1]\to \nu_t$ denote the generalised geodesic with base $\mu$ joining $\nu$ and $\rho$, {\it i.e.} $\nu_t={T_t}_\# \mu:=((1-t)T_0+t T_1)_\#\mu$ where $T_0$ (resp. $T_1$) denotes the Brenier map between $\mu$ and $\nu$ (resp. $\rho$). Since $(T_0, T_0+t(T_1-T_0))_\#\mu$ has marginals $\nu$ and $\nu_t$ we have
 \begin{equation}\label{w1ent}
 \W_1(\nu, \nu_t)\leq \int_Y \vert T_t-T_0 \vert \dd\mu \leq t \diam(Y)\;.
 \end{equation}
  By the convexity of $\J_\mu$ along generalised geodesics with base $\mu$, setting $g(t)=\J_\mu[\nu_t]$ and using $g(t)\leq (1-t) g(0)+tg(1)$ for all $t\in(0,1)$ we have:
 \[\J_\mu[\rho]-\J_\mu[\nu]=g(1)-g(0)\geq  \frac{1}{t} [g(t)-g(0)]= \frac{1}{t} (\J_\mu[\nu_t]-\J_\mu[\nu]).\]
 Let us write $\nu_t$ as $\nu_t:=\nu+th_t$, by the (usual) convexity of $F$ and that of $\W_2^2(\mu, .)$, we first have
 \[
 \frac{1}{t} \left( \frac{1}{2} \W_2^2(\mu, \nu_t)+\int_Y F(\nu_t)\dd m_0 -  \frac{1}{2} \W_2^2(\mu, \nu)-\int_Y F(\nu)\dd m_0\right) \geq \int_Y [f(\nu) +\varphi^c] h_t \dd m_0
 \]
 Let us now expand $\iint_{Y^2} \phi(y,z) \dd \nu_t(y)\dd \nu_t(z)$ in powers of $t$ as
 \begin{equation*}
 \frac{1}{2t}  \iint_{Y^2} \phi(y,z) \dd \left[(\nu+th_t)(y)(\nu+th_t)(z)- \nu(y)\nu(z)\right]=\iint_{Y^2} \phi(y,z) \dd h_t(y)\dd \nu(z)  +R_t
 \end{equation*}
 where
  \[2\vert R_t\vert =\frac{1}{t} \Big\vert \iint \phi(y,z) \dd(\nu_t-\nu)(y)\dd(\nu_t-\nu)(z)\Big\vert \leq \frac{1}{t} \W_1(\nu_t,\nu) \times \Lip(\psi_t,Y)\leq \diam(Y) \Lip(\psi_t,Y)\]
 where the last inequality follows from~\eqref{w1ent} and $\psi_t$ is defined by 
 \[
 \psi_t(y):=\int_{Y} \phi(y,z) \dd(\nu_t-\nu)(z)
 =\int_{Y} (\phi(y, T_t(x))-\phi(y, T_0(x))) \dd\mu(x) 
 \;.\]
 Since $\nabla \phi$ is locally Lipschitz and $T_t-T_0$ is uniformly bounded by $t\diam(Y)$ we find  that  there is a constant $C$ such that $\Lip(\psi_t, Y) \leq C t$ so that  $R_t= O(t)$. Putting everything together and using ~\eqref{chepa}  we get
 \[\begin{split}
 \J_\mu[\rho]-\J_\mu[\nu]\geq \int_Y (f(\nu(y))+v(y)+\int_Y \phi(y,z) \dd \nu(z)+\varphi^c(y))\dd h_t(y) +R_t\\
 =   \int_{Y} (\V[\nu]+\varphi^c) \dd h_t+R_t\geq  M \int_Y \dd h_t+ R_t=R_t\to 0 \mbox{ as $t\to 0^+$}\end{split}\]
 which proves that $\nu$ is a minimiser.

\bibliographystyle{plainnat}
\bibliography{biblio}
\end{document}